\theoremstyle{plain}
\newtheorem{theorem}{Theorem}
\newtheorem{lemma}[theorem]{Lemma}
\newtheorem{proposition}[theorem]{Proposition}
\newtheorem{corollary}[theorem]{Corollary}
\theoremstyle{definition}
\newtheorem{definition}[theorem]{Definition}
\newtheorem*{remark}{Remark}
\newtheorem*{problem}{Problem}
\newcommand{\tuple}[1]{{\langle #1 \rangle}}
\newcommand{\ck}{\mathsf{CK}}
\newcommand{\ik}{\mathsf{IK}}
\newcommand{\ckb}{\mathsf{CKB}}
\newcommand{\ikb}{\mathsf{IKB}}
\begin{document}

% Title
\title{Collapsing Constructive and Intuitionistic Modal Logics}

% Author
\author{Leonardo Pacheco \\
{\small Institute of Discrete Mathematics and Geometry, TU Wien, Austria.} \\
{\small \texttt{leonardovpacheco@gmail.com}}}
\date{}

\maketitle
%%%%%%%%%%%%%%%%%%%%%%%%%%%%%%%%%%%%%%%%%%%%%%%%%%%%%%%%%%%%%%%

\begin{abstract}
    We prove that the constructive and intuitionistic variants of the modal logic $\mathsf{KB}$ coincide.
    This result contrasts with a recent result by Das and Marin, who showed that the constructive and intuitionistic variants of $\mathsf{K}$ do not prove the same diamond-free formulas.
\end{abstract}

%%%%%%%%%%%%%%%%%%%%%%%%%%%%%%%%%%%%%%%%%%%%%%%%%%%%%%%%%%%%%%%
\section{Introduction}
%%%%%%%%%%%%%%%%%%%%%%%%%%%%%%%%%%%%%%%%%%%%%%%%%%%%%%%%%%%%%%%
Das and Marin \cite{das2023diamonds} recently (re)discovered that the constructive and intuitionistic variants of $\mathsf{K}$ do not prove the same diamond-free formulas.
We show that the constructive and intuitionistic variants of the modal logic $\mathsf{KB}$ coincide.

The modal logic $\ck$ was first studied by Mendler and de Paiva \cite{mendler2005constructive}, and $\ik$ was first studied by Fischer Servi \cite{fischerservi1977modal}.
Both logics consider non-interdefinable $\Box$ and $\Diamond$ modalities.
The main difference between these logics is the classically equivalent variants of the axiom $K$ they consider.
While $\ck$ only has $K_\Box := \Box(\varphi\to\psi) \to (\Box\varphi \to \Box \psi)$; and $K_\Diamond := \Box(\varphi\to\psi) \to (\Diamond\varphi \to \Diamond \psi)$; $\ik$ also includes the axioms $FS := (\Diamond \varphi \to \Box\psi) \to \Box(\varphi\to\psi)$; $DP := \Diamond (\varphi\lor\psi) \to \Diamond\varphi\lor\Diamond\psi$; and $N := \neg \Diamond \bot$.
For more information on $\ck$ and $\ik$, see \cite{das2023diamonds,simpson1994proof}.

The logic $\ikb$ is obtained by adding the axioms $B_\Box := P\to\Box\Diamond P$ and $B_\Diamond := \Diamond\Box P \to P$ to $\ik$. It was first studied by Simpson \cite{simpson1994proof}, who provided semantics and proved a completeness theorem for $\ikb$.
The logic $\ckb$ is similarly obtained from $\ck$. Its proof theory was studied by Arisaka, Das and Stra\ss burger \cite{arisaka2015nested}, who provided a complete nested sequent calculus for it.
As far as we are aware, there are no semantics for $\ckb$ in the literature.

While the axioms $FS$, $DP$, and $N$ describe the interaction between $\Box$ and $\Diamond$ modalities, they also allow $\ik$ to prove $\Diamond$-free formulas not provable in $\ck$.
This observation was already showed by Grefe's PhD thesis \cite{grefe1999fischer}, but not published.
A semantic characterization of the axioms $FS$, $DP$, and $N$ was recently given by de Groot, Shillito, and Clouston \cite{degroot2024semantical}.

The situation is quite different when the axiom $B$ is involved: $\ckb$ and $\ikb$ coincide.
Some of the works mentioned above already point to the collapse of $\ckb$ and $\ikb$.
Arisaka \emph{et al.} \cite{arisaka2015nested} already showed that $\ckb$ proves $DP$ and $N$.
The results of de Groot \emph{et al.} implies that the natural semantics for $\ckb$ validates $FS$, $DP$, and $N$.
We finish this line of argument and show that the two logics $\ckb$ and $\ikb$ prove the same formulas.

\paragraph{Outline}
In Section \ref{sec::preliminaries}, we define the logics studied in this paper and their respective semantics.
In Section \ref{sec::completeness}, we prove the completeness of $\ckb$ and $\ikb$ with respect to both $\ckb$-models and $\ikb$-models via a canonical model argument; key to our proof will be the fact that the canonical model for $\ckb$ is an $\ikb$-model.
In Section \ref{sec::conclusion}, we comment on a problem for future work.
\paragraph{Acknowledgements} This work was partially supported by FWF grant TAI-797.
%%%%%%%%%%%%%%%%%%%%%%%%%%%%%%%%%%%%%%%%%%%%%%%%%%%%%%%%%%%%%%%

%%%%%%%%%%%%%%%%%%%%%%%%%%%%%%%%%%%%%%%%%%%%%%%%%%%%%%%%%%%%%%%
\section{Preliminaries}
\label{sec::preliminaries}
%%%%%%%%%%%%%%%%%%%%%%%%%%%%%%%%%%%%%%%%%%%%%%%%%%%%%%%%%%%%%%%
\paragraph{Syntax}
Fix a set $\mathrm{Prop}$ of proposition symbols.
The \emph{modal formulas} are defined by the following grammar:
\[
    \varphi := P \mid \bot \mid \varphi\land\varphi  \mid \varphi\lor\varphi \mid \varphi\to\varphi \mid \Box\varphi \mid \Diamond\varphi.
\]
We define $\neg\varphi:= \varphi\to\bot$ and $\top:=\bot\to\bot$.

\begin{definition}
    
    A \emph{(modal) logic} is a set of formulas closed under necessitation and \emph{modus ponens}
    \[
        (\mathbf{Nec}) \; \frac{\varphi}{\Box\varphi} \;\;\;\;\;\text{ and }\;\;\;\;\;
        (\mathbf{MP}) \; \frac{\varphi \;\;\; \varphi\to\psi}{\psi}.
    \]
    $\ck$ is the least logic containing the all intuitionistic tautologies, $K_\Box := \Box(\varphi\to\psi) \to (\Box\varphi \to \Box \psi)$; and $K_\Diamond := \Box(\varphi\to\psi) \to (\Diamond\varphi \to \Diamond \psi)$.
\end{definition}

Before we define the other logics used in this paper, we define the following variations of the axiom $K$:
\begin{itemize}
    \item $FS := (\Diamond \varphi \to \Box\psi) \to \Box(\varphi\to\psi)$;
    \item $DP := \Diamond (\varphi\lor\psi) \to \Diamond\varphi\lor\Diamond\psi$; and
    \item $N := \neg \Diamond \bot$.
\end{itemize}
On the classical setting, the axioms $K_\Box$, $K_\Diamond$, $DP$, $FS$, $N$ are equivalent, but the same does not happen on the constructive setting.
We follow the notation of \cite{balbiani2021constructive}; these axioms are also respectively called $k_1$, $k_2$, $k_3$, $k_4$, $k_5$ by Simpson \cite{simpson1994proof} and $K_\Box$, $K_\Diamond$, $C_\Diamond$, $I_{\Diamond\Box}$, $N_\Box$ by Dalmonte \emph{et al.} \cite{dalmonte2020intuitionistic}.

We will also need the following dual versions of the modal axiom $B$:
\begin{itemize}
    \item $B_\Box := P\to\Box\Diamond P$; and
    \item $B_\Diamond := \Diamond\Box P \to P$.
\end{itemize}
Again, while $B_\Box$ and $B_\Diamond$ are equivalent in the classical setting, they are not equivalent in the constructive setting.

\begin{definition}
    Let $\Lambda$ be a logic and $X$ be a set of formulas.
    The logic $\Lambda + X$ is the least logic containing $\Lambda \cup X$.
    In particular, we will consider the following logics:
    \begin{itemize}
        \item $\ckb := \ck + \{B_\Box, B_\Diamond\}$;
        \item $\ik := \ck + \{FS, DP, N\}$; and
        \item $\ikb := \ik  + \{B_\Box, B_\Diamond\} = \ckb + \{FS, DP, N\}$.
    \end{itemize}
    For any logic $\Lambda \in \{\ck, \ik, \ckb, \ikb\}$ and formula $\varphi$, we write $\Lambda\vdash\varphi$ for $\varphi\in\Lambda$.
\end{definition}

It is already known that $\ckb$ proves two of the extra axioms in $\ikb$:
\begin{theorem}[Arisaka, Das, Stra\ss burger \cite{arisaka2015nested}]
    \label{thm::ads}
    $\ckb\vdash DP$ and $\ckb\vdash N$.
\end{theorem}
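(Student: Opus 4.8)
The plan is to give two short Hilbert-style derivations inside $\ckb$, using only the axioms $B_\Box$, $B_\Diamond$, $K_\Box$, $K_\Diamond$, the rules $\mathbf{Nec}$ and $\mathbf{MP}$, and intuitionistic propositional reasoning. First I would record the reusable fact that the modalities are \emph{monotone}: whenever $\ckb\vdash\alpha\to\beta$, applying $\mathbf{Nec}$ and then $K_\Box$ (resp.\ $K_\Diamond$) gives $\ckb\vdash\Box\alpha\to\Box\beta$ (resp.\ $\ckb\vdash\Diamond\alpha\to\Diamond\beta$). The common pattern for both axioms is then: prove that the relevant hypothesis implies $\Box\Diamond\theta$ for a suitable $\theta$, apply monotonicity of $\Diamond$ to obtain a formula of the shape $\Diamond\Box\theta$ on the right, and collapse it with an instance of $B_\Diamond$.

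For $N$: the formula $\bot\to\Box\bot$ is an intuitionistic tautology (ex falso), so monotonicity of $\Diamond$ yields $\Diamond\bot\to\Diamond\Box\bot$; chaining this with the instance $\Diamond\Box\bot\to\bot$ of $B_\Diamond$ gives $\Diamond\bot\to\bot$, that is, $N$. Note this half uses only $B_\Diamond$, $K_\Diamond$, and $\mathbf{Nec}$.

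For $DP$: from the $B_\Box$-instance $\varphi\to\Box\Diamond\varphi$ and monotonicity of $\Box$ applied to the tautology $\Diamond\varphi\to\Diamond\varphi\lor\Diamond\psi$, I get $\varphi\to\Box(\Diamond\varphi\lor\Diamond\psi)$, and symmetrically $\psi\to\Box(\Diamond\varphi\lor\Diamond\psi)$; disjunction elimination then gives $\varphi\lor\psi\to\Box(\Diamond\varphi\lor\Diamond\psi)$. Monotonicity of $\Diamond$ turns this into $\Diamond(\varphi\lor\psi)\to\Diamond\Box(\Diamond\varphi\lor\Diamond\psi)$, and the instance of $B_\Diamond$ with $P:=\Diamond\varphi\lor\Diamond\psi$ collapses the consequent to $\Diamond\varphi\lor\Diamond\psi$, which is $DP$.

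I do not expect a serious obstacle: both derivations are short. The only thing to be careful about is that, in the constructive setting, every manipulation underneath a $\Box$ or a $\Diamond$ must be justified via $\mathbf{Nec}$ together with $K_\Box$ or $K_\Diamond$ — we may not invoke any modal principle not yet derived, and in particular must not tacitly use $FS$, $DP$, or $N$ themselves. Once monotonicity is isolated as a lemma, verifying the two implication chains is mechanical.
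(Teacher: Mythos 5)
Your proposal is correct. For $N$ your derivation is exactly the paper's: $\bot\to\Box\bot$ is an intuitionistic tautology, $\mathbf{Nec}$ plus $K_\Diamond$ gives $\Diamond\bot\to\Diamond\Box\bot$, and the $B_\Diamond$ instance $\Diamond\Box\bot\to\bot$ closes the chain. For $DP$, however, you go further than the paper, which does not give a syntactic derivation at all but merely cites the nested-sequent proofs of Arisaka, Das and Stra\ss burger. Your Hilbert-style argument for $DP$ checks out: $\varphi\to\Box\Diamond\varphi$ and $\psi\to\Box\Diamond\psi$ are instances of $B_\Box$, monotonicity of $\Box$ (via $\mathbf{Nec}$ and $K_\Box$) applied to the tautologies $\Diamond\varphi\to\Diamond\varphi\lor\Diamond\psi$ and $\Diamond\psi\to\Diamond\varphi\lor\Diamond\psi$ yields $\varphi\lor\psi\to\Box(\Diamond\varphi\lor\Diamond\psi)$ by disjunction elimination, monotonicity of $\Diamond$ (via $\mathbf{Nec}$ and $K_\Diamond$) lifts this to $\Diamond(\varphi\lor\psi)\to\Diamond\Box(\Diamond\varphi\lor\Diamond\psi)$, and the $B_\Diamond$ instance with $\Diamond\varphi\lor\Diamond\psi$ in place of $P$ collapses the consequent. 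Every step uses only $K_\Box$, $K_\Diamond$, $B_\Box$, $B_\Diamond$, $\mathbf{Nec}$, $\mathbf{MP}$ and intuitionistic tautologies, so nothing circular is smuggled in. What your approach buys is a self-contained, purely axiomatic verification of both halves of the theorem inside $\ckb$ itself, whereas the paper's citation route relies on the soundness and completeness of an external nested sequent calculus; the trade-off is only length, and your derivation of $DP$ would be a worthwhile addition to the text.
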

\begin{proof}
    Both are proved in \cite{arisaka2015nested} using nested sequents.
    We give a direct proof of $N$ in $\ckb$, since we will use it below.

    As $\bot\to\Box\bot$ is a tautology, $\ckb\vdash\Diamond\bot\to\Diamond\Box\bot$ by $\mathbf{Nec}$ and $K_\Diamond$.
    But $\Diamond\Box\bot\to\bot$ is an instance of $B_\Diamond$, so $\ckb\vdash\Diamond\bot\to\bot$ too.
\end{proof}

\paragraph{Semantics}
Our semantics is based on Mendler and de Paiva's $\ck$-models \cite{mendler2005constructive}.
Models for the logics $\ik$, $\ckb$, and $\ikb$ will be obtained by adding restrictions to $\ck$-models.
\begin{definition}
    A $\ck$-model is a tuple $M=\tuple{W, W^\bot ,\preceq, R, V}$ where: 
    \begin{itemize}
        \item $W$ is the set of \emph{possible worlds}; 
        \item $W^\bot\subseteq W$ is the set of \emph{fallible worlds};
        \item the \emph{intuitionistic relation} $\preceq$ is a reflexive and transitive relation over $W$;
        \item the \emph{modal relation} $R$ is a relation over $W$; and 
        \item $V:\mathrm{Prop}\to \mathcal{P}(W)$ is a \emph{valuation function}.
    \end{itemize}
    We require that, if $w \preceq v$ and $w\in V(P)$, then $v\in V(P)$; and that $W^\bot\subseteq V(P)$, for all $P\in\mathrm{Prop}$.
    We also require that, if $w\in W^\bot$ and either $w\preceq v$ or $w R v$, then $v\in W^\bot$.
\end{definition}
We follow Mendler and de Paiva's terminology \cite{mendler2005constructive} and call the worlds in $W^\bot$ by fallible worlds; these worlds are also known as \emph{sick} worlds \cite{veldman1976intuitionistic} and \emph{exploding} worlds \cite{ilik2010kripke}.

Fix a $\ck$-model $M = \tuple{W, W^\bot,\preceq, R,V}$.
Define the valuation of the formulas over $M$ by induction on the structure of the formulas:
\begin{itemize}
    \item $M,w\models P$ iff  $w\in V(P)$;
    \item $M,w\models \bot$ iff $w\in W^\bot$;
    \item $M,w\models \varphi\land\psi$ iff $M,w\models\varphi$ and $M,w\models\psi$;
    \item $M,w\models \varphi\lor\psi$ iff $M,w\models\varphi$ or $M,w\models\psi$;
    \item $M,w\models \varphi\to\psi$ iff, for all $v\in W$, if $w\preceq v$ and $M,v\models\varphi$, then $M,v\models\psi$;
    \item $M,w\models \Box\varphi$ iff, for all $v,u\in W$, if $w\preceq v$ and $vRu$, then $M,u\models\varphi$; and
    \item $M,w\models \Diamond\varphi$ iff, for all $v\in W$, there is $u$ such that $vRu$ and $M,u\models\varphi$.
\end{itemize}

Before we are ready to define models for the logics $\ik$, $\ckb$, and $\ikb$, we need the following definition:
\begin{definition}
    Let $M = \tuple{W, W^\bot,\preceq, R,V}$ be a $\ck$-model.
    \begin{itemize}
        \item The relation $R$ is forward confluent iff $wR  v$ and $w\preceq w'$ implies there is $v'$ such that $v\preceq w' R v'$. 
        \item The relation $R$ is backward confluent iff $wR  v \preceq v'$ implies there is $w'$ such that $w\preceq w' R v'$.

    \end{itemize}
    Forward confluence is also known by F1 \cite{simpson1994proof}, forth-up confluence \cite{aguilera2022-godelLTL}, and $\mathsf{C}_\Diamond$-strong \cite{degroot2024semantical}.
    Backward confluence is also known by F2 \cite{simpson1994proof}, back-up confluence \cite{aguilera2022-godelLTL}, and $\mathsf{I}_{\Diamond\Box}$-weak \cite{degroot2024semantical}.
    Forward and backward confluence are illustrated in Figure \ref{figure::confluences}.
\end{definition}
\begin{figure}[ht]
\centering
\tikzstyle{world}=[circle,draw,minimum size=5mm,inner sep=0pt]
\begin{tikzpicture}
    \node (w) at (0,0) {$w$};
    \node (w2) at (2,0) {$w'$};
    \node (v) at (0,-2) {$v$};
    \node (v2) at (2,-2) {$v'$};

    \draw[->] (w) -- (w2) node[midway,above] {$\preceq$};
    \draw[->] (w) -- (v) node[midway,left] {$R$};
    \draw[dashed,->] (v) -- (v2) node[midway,below] {$\preceq$};
    \draw[dashed,->] (w2) -- (v2) node[midway,right] {$R$};
\end{tikzpicture}
\begin{tikzpicture}
    \node (w) at (0,0) {$w$};
    \node (w2) at (2,0) {$w'$};
    \node (v) at (0,-2) {$v$};
    \node (v2) at (2,-2) {$v'$};

    \draw[dashed,->] (w) -- (w2) node[midway,above] {$\preceq$};
    \draw[->] (w) -- (v) node[midway,left] {$R$};
    \draw[->] (v) -- (v2) node[midway,below] {$\preceq$};
    \draw[dashed,->] (w2) -- (v2) node[midway,right] {$R$};
\end{tikzpicture}
\caption{Schematics for forward and backward confluence, respectively. Solid arrows correspond to the universal quantifiers and dashed arrows correspond to the existential quantifiers.}
\label{figure::confluences}
\end{figure}
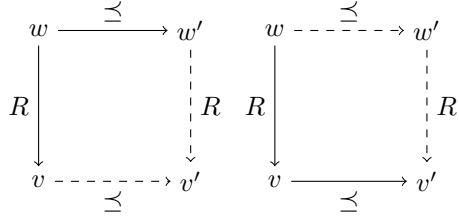

Proposition \ref{lem::classical-diamonds} below shows that, in a $\ck$-model where the modal relation is forward confluent, we can evaluate diamonds as in the classical setting.
In the rest of this paper, we will do so whenever possible (except in Lemma \ref{lem::soundness}, where we want make explicit the use of forward confluence).
\begin{proposition}
    \label{lem::classical-diamonds}
    Let $M = \tuple{W, W^\bot,\preceq, R,V}$ be a $\ck$-model where $R$ is a forward confluent relation and $\varphi$ be a modal formula.
    Then, for all $w\in W$, $M,w\models\Diamond\varphi$ iff there is $v\in W$ such that $wRv$ and $M,v\models\varphi$.
\end{proposition}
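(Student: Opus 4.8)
The plan is to prove the two directions separately. The left-to-right direction is immediate from the semantic clause for $\Diamond$, and the right-to-left direction is where forward confluence enters, combined with persistence of truth along $\preceq$.

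For the left-to-right direction, assume $M,w\models\Diamond\varphi$. Instantiating the semantic clause for $\Diamond$ at the world $w$ itself, which is legitimate since $\preceq$ is reflexive, yields a world $u$ with $wRu$ and $M,u\models\varphi$, which is exactly the desired conclusion. Note that this half does not use forward confluence at all.

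For the right-to-left direction, assume there is $v$ with $wRv$ and $M,v\models\varphi$, and let $w'$ be an arbitrary world with $w\preceq w'$; we must produce a $u$ with $w'Ru$ and $M,u\models\varphi$. From $wRv$ and $w\preceq w'$, forward confluence gives a world $v'$ with $v\preceq v'$ and $w'Rv'$. Since $M,v\models\varphi$ and $v\preceq v'$, the persistence (monotonicity) lemma for $\ck$-models — proved by a routine induction on $\varphi$ from the conditions that $V$ is $\preceq$-monotone, that $W^\bot\subseteq V(P)$, and that $W^\bot$ is closed upwards under $\preceq$ and under $R$ — gives $M,v'\models\varphi$. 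Thus $u:=v'$ works, and since $w'$ was arbitrary we conclude $M,w\models\Diamond\varphi$.

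I do not expect a genuine obstacle here: the argument is a direct unfolding of definitions plus one application of forward confluence. The only points requiring a little care are (i) having the persistence lemma at hand, since the backward direction transports the witness for $\varphi$ from $v$ up to $v'$, and (ii) applying forward confluence in the correct orientation, i.e.\ taking the $R$-edge $wRv$ and the $\preceq$-edge $w\preceq w'$ as the given data and obtaining $v\preceq v'$ and $w'Rv'$ as the produced edges.
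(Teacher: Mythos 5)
Your proof is correct and follows essentially the same route as the paper's: reflexivity of $\preceq$ for the forward direction, and forward confluence plus $\preceq$-persistence of truth for the converse. Your explicit remark that persistence must be established by induction on $\varphi$ is a point the paper leaves implicit, but the argument is otherwise identical.
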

\begin{proof}
    Suppose $M,w\models\Diamond\varphi$. 
    As $\preceq$ is reflexive, $w\preceq w$. 
    So there is $v$ such that $wRv$ and $M,v\models\varphi$.
    Now, suppose there is $v$ such that $wRv$ and $M,v\models\varphi$.
    Let $w'$ be such that $w \preceq w'$.
    By forward confluence, there is $v'$ such that $w'Rv$ and $v\preceq v'$.
    By the monotonicity of $\preceq$, $M,v'\models\varphi$ too.
    Therefore $M,w\models\Diamond\varphi$.
\end{proof}

\begin{definition}
    Let $M = \tuple{W, W^\bot,\preceq, R,V}$ be a $\ck$-model.
    Then:
    \begin{itemize}
        \item $M$ is a \emph{$\ckb$-model} iff $R$ is symmetric, forward confluent, and backward confluent;
        \item $M$ is an \emph{$\ik$-model} iff $W^\bot =\emptyset$ and $R$ is forward and backward confluent; and
        \item $M$ is an \emph{$\ikb$-model} iff it is an $\ik$-model and $R$ is symmetric. Equivalently, $M$ is an $\ikb$-model iff it is an $\ckb$-model and $W^\bot = \emptyset$.
    \end{itemize}
    When the relation $R$ is symmetric, we denote it by $\sim$.

    For any logic $\Lambda \in \{\ck, \ik, \ckb, \ikb\}$ and formula $\varphi$, we write $\Lambda\models\varphi$ if $M,w\models\varphi$ for all $\Lambda$-model $M = \tuple{W, W^\bot,\preceq, R,V}$ and $w\in W$.
\end{definition}

Completeness for $\ck$, $\ik$, and $\ikb$ with respect to their respective classes of models is already known:
\begin{theorem}
    Let $\varphi$ be a formula.
    Then:
    \begin{itemize}
        \item $\ck\vdash\varphi$ iff $\ck\models\varphi$ \cite{mendler2005constructive};
        \item $\ik\vdash\varphi$ iff $\ik\models\varphi$ \cite{fischerservi1977modal};
        \item $\ikb\vdash\varphi$ iff $\ikb\models\varphi$ \cite{simpson1994proof}.
    \end{itemize}
\end{theorem}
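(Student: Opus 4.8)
The plan is to establish each biconditional in the standard way, with a single template covering all three logics. For soundness, $\Lambda\vdash\varphi\Rightarrow\Lambda\models\varphi$, I would induct on the length of a derivation of $\varphi$ in $\Lambda$. The intuitionistic tautologies are valid on every $\ck$-model because the $\preceq$-clauses for the propositional connectives together with the clause $w\models\bot$ iff $w\in W^\bot$ are exactly the Kripke semantics of $\mathsf{IPC}$ with exploding nodes; $K_\Box$ and $K_\Diamond$ are checked directly from the truth clauses, needing no frame condition; and $\mathbf{Nec}$ and $\mathbf{MP}$ obviously preserve validity. For the extra axioms one checks the constructive readings of familiar correspondences: $DP$ and $N$ use forward confluence (via Proposition \ref{lem::classical-diamonds}), $N$ also using $W^\bot=\emptyset$; $FS$ uses both forward and backward confluence; and $B_\Box$, $B_\Diamond$ use symmetry, again with forward confluence to read $\Diamond$ classically and, for $B_\Box$, monotonicity of $V$ along $\preceq$. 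Each of these is a short calculation.

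For completeness, $\Lambda\models\varphi\Rightarrow\Lambda\vdash\varphi$, I would build the canonical model $M^\Lambda$: its worlds are the prime $\Lambda$-theories (sets of formulas closed under $\Lambda$-consequence and satisfying the disjunction property), $\preceq$ is inclusion, $\Gamma\in W^\bot$ iff $\bot\in\Gamma$, and $\Gamma R\Delta$ iff $\{\psi:\Box\psi\in\Gamma\}\subseteq\Delta$ and $\Delta\subseteq\{\psi:\Diamond\psi\in\Gamma\}$. The crux is the Truth Lemma, $M^\Lambda,\Gamma\models\varphi$ iff $\varphi\in\Gamma$, by induction on $\varphi$: the propositional cases are the usual prime-theory/Lindenbaum arguments, and the modal cases rest on two existence lemmas — if $\Box\psi\notin\Gamma$ there are $\Gamma'\supseteq\Gamma$ and $\Delta$ with $\Gamma'R\Delta$ and $\psi\notin\Delta$; and if $\Diamond\psi\in\Gamma$ then for every $\Gamma'\supseteq\Gamma$ there is $\Delta$ with $\Gamma'R\Delta$ and $\psi\in\Delta$ — both obtained by extending a suitable set to a prime theory sandwiched between $\{\psi:\Box\psi\in\Gamma'\}$ and $\{\psi:\Diamond\psi\in\Gamma'\}$. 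Given this, $\Lambda\not\vdash\varphi$ yields a prime $\Lambda$-theory omitting $\varphi$, which refutes $\varphi$ in $M^\Lambda$.

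The step I expect to be the main obstacle is showing that $M^\Lambda$ genuinely lies in the class of $\Lambda$-models: this is trivial for $\ck$, but for $\ik$ one must verify that the canonical relation is forward and backward confluent and that every prime theory is consistent, and for $\ikb$ additionally that it is symmetric. Forward confluence is forced by $DP$ (together with $K_\Diamond$), backward confluence by $FS$, $W^\bot=\emptyset$ by $N$, and symmetry by $B_\Box$ and $B_\Diamond$ (cf.\ the correspondences of \cite{degroot2024semantical}); in each case the point is that the defining inclusions of $R$ can be re-established after moving along $\preceq$, which is exactly what the relevant axiom provides once combined with the sandwiching lemma above. Since all three completeness statements already appear in the literature, in the write-up I would simply cite \cite{mendler2005constructive}, \cite{fischerservi1977modal}, and \cite{simpson1994proof} for the details, recording only that they all instantiate this single canonical-model template — the same template that will, after checking that the canonical $\ck$-model for $\ckb$ is symmetric and confluent, yield the $\ckb$ case needed below.
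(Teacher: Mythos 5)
The paper itself gives no proof of this theorem---it is quoted from the literature with citations---so the real question is whether your uniform sketch would actually go through. For $\ik$ and $\ikb$ it would: the soundness checks are the standard ones, and the plain prime-theory canonical model with $\Gamma R\Delta$ iff $\Gamma^\Box\subseteq\Delta\subseteq\Gamma^\Diamond$ is exactly the Fischer Servi/Simpson construction, with $FS$ giving backward confluence and $DP$, $N$ making the existence lemmas work.

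The gap is the claim that the same template ``instantiates'' for $\ck$. It does not, and the paper's own Remark after the definition of $M_c$ flags exactly this: using only theories for worlds is not sufficient for constructive modal logics. Concretely, both of your existence lemmas break without $DP$ and $N$. In the $\Diamond$-lemma you must extend $(\Gamma')^\Box\cup\{\psi\}$ to a \emph{prime} theory $\Delta$ while keeping $\Delta\subseteq(\Gamma')^\Diamond$; each time some $\theta_1\lor\theta_2$ enters $\Delta$ you must pick a disjunct $\theta_i$ with $\Diamond\theta_i\in\Gamma'$, and that choice is licensed precisely by $DP$, which $\ck$ does not prove. Likewise, if $\Diamond\bot\in\Gamma$ (consistent in $\ck$, since $N$ is not a theorem) the requirement $\Delta\subseteq\Gamma^\Diamond$ forces an $R$-successor that is a fallible world, which conflicts with taking worlds to be consistent prime theories and with your stipulation that $\Gamma\in W^\bot$ iff $\bot\in\Gamma$. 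In the $\Box$-lemma, nothing guarantees that a prime extension of $\Gamma^\Box$ omitting $\psi$ can be placed inside $(\Gamma')^\Diamond$ for some $\Gamma'\supseteq\Gamma$: in the paper this is arranged in Lemma \ref{lem::verbose-proof-for-box} using $B_\Box$ and $B_\Diamond$, and in $\ik$ it uses $FS$, but $\ck$ has neither. Mendler and de Paiva's actual completeness proof therefore uses a richer notion of world (theories paired with extra data, as in the alternatives listed in the Remark). Since you intend to cite the literature anyway the theorem itself is safe, but the assertion of a single canonical-model template covering all three logics should be dropped or restricted to the logics proving $FS$, $DP$, $N$---which is, in effect, the very distinction the paper is about.
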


It should be emphasized that, while $\ck$-models have no confluence requirement, $\ckb$-models do need both forward and backward confluence.
Forward confluence is necessary because $B_\Box$ is not sound over $\ck$-models whose modal relation is symmetric by not forward confluent.
Furthermore, in any $\ck$-model whose modal relation is symmetric, forward and backward confluence coincide. 
Therefore, we need the modal relation in a $\ckb$-model to be backward confluent too.
We show these necessities in the next two propositions.
\begin{proposition}
    \label{prop::confluence-is-necessary-kb}
    There is a $\ck$-model $M=\tuple{W, W^\bot, \preceq, \sim, V}$ and $w\in W$ such that $\sim$ is a symmetric relation and $B_\Box$ does not hold at $w$.
\end{proposition}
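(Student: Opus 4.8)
The plan is to exhibit an explicit, small $\ck$-model. The underlying intuition is that $B_\Box$ fails at a world $w$ as soon as $M,w\models P$, the world $w$ has an $R$-successor $u$, and $u$ has a $\preceq$-successor that is $R$-isolated: at such a $\preceq$-successor there is no witness for $\Diamond P$, so $M,u\not\models\Diamond P$, and therefore $M,w\not\models\Box\Diamond P$. Symmetry of $R$ does not block this: although symmetry forces $uRw$ with $M,w\models P$, the intuitionistic clause for $\Diamond$ quantifies over \emph{all} $\preceq$-successors of $u$, including the $R$-isolated one, and that kills the diamond. Equivalently, the model I build is symmetric but not forward confluent, so Proposition~\ref{lem::classical-diamonds} does not apply to it.

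Concretely, I would take $M=\tuple{W,W^\bot,\preceq,\sim,V}$ with $W=\{w,u,u'\}$, $W^\bot=\emptyset$, with $\preceq$ the reflexive closure of $\{(u,u')\}$ (so that $w$ is $\preceq$-incomparable with both $u$ and $u'$), with $\sim$ the symmetric closure of $\{(w,u)\}$ (so that $u'$ has no $R$-successor at all), with $V(P)=\{w\}$, and with $V(Q)=\emptyset$ for every proposition symbol $Q\neq P$. First I would check that $M$ really is a $\ck$-model: $\preceq$ is reflexive and transitive by construction; monotonicity of $V$ holds because the only nontrivial $\preceq$-step is $u\preceq u'$ and $u\notin V(P)$; and the two closure conditions involving $W^\bot$ are vacuous since $W^\bot=\emptyset$. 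Then I would verify directly that $B_\Box$ fails at $w$: we have $M,w\models P$ because $w\in V(P)$; we have $M,u\not\models\Diamond P$ because $u\preceq u'$ and $u'$ has no $R$-successor; hence, since $w\preceq w$ and $w\sim u$, we get $M,w\not\models\Box\Diamond P$. As $M,w\models P$ but $M,w\not\models\Box\Diamond P$, this gives $M,w\not\models B_\Box$, as required.

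I do not anticipate a real obstacle. The one delicate point — and the whole reason such a model exists — is that one must resist "recovering" $\Diamond P$ at $u$ from the symmetry of $R$: the symmetric partner $w$ of $u$ does satisfy $P$, but the semantics of $\Diamond$ also inspects the $\preceq$-successor $u'$ of $u$, where confluence fails. I would close with the remark that this same $M$ shows that a symmetric $\ck$-model need not be forward confluent (forward confluence applied to $u\sim w$ and $u\preceq u'$ would demand an $R$-successor of $u'$), which is exactly why forward confluence is built into the definition of a $\ckb$-model.
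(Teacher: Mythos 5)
Your proof is correct and is essentially the same as the paper's: up to renaming ($u,u'$ for $v,v'$), you construct exactly the counterexample the paper uses, namely a three-world model with $w\sim u$, $u\preceq u'$, $V(P)=\{w\}$, and $u'$ $R$-isolated, so that $\Diamond P$ fails at $u$ and hence $\Box\Diamond P$ fails at $w$. The verification and the closing remark about the failure of forward confluence match the paper's discussion.
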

\begin{proof}
    Consider the model $M=\tuple{W, W^\bot, \preceq, \sim, V}$ where $W := \{w,v,v'\}$; $W^\bot:=\emptyset$; $\preceq := W\times W \cup \{\tuple{v,v'}\}$; $\sim := \{\tuple{w,v},\tuple{v,w}\}$; and $V(P) := \{w\}$. 
    This model is represented in Figure \ref{figure::confluence-is-necessary-kb}.
    The model $M$ satisfies all the requirements for $\ckb$-models but forward and backward confluence.
    Furthermore, $w\models P$ and $w\not\models \Box\Diamond P$; thus $B_\Box := P\to\Box\Diamond P$ does not hold at $w$.
    \begin{figure}[ht]
    \centering
    \tikzstyle{world}=[circle,draw,minimum size=5mm,inner sep=0pt]
    \begin{tikzpicture}
        \node (w)  at (0, 0) {$w$};
        \node (P)  at (0.6,0) {$\models P$};
        \node (v)  at (0, -1.5) {$v$};
        \node (v2) at (1.5, -1.5) {$v'$};

        \draw[<->] (w) -- (v) node[midway,left] {$\sim$};
        \draw[->] (v) -- (v2) node[midway,above] {$\preceq$};
    \end{tikzpicture}
    \caption{The $\ck$-model $M$ from Proposition \ref{prop::confluence-is-necessary-kb}, whose modal relation is symmetric but not forward confluent. $B_\Box$ fails at $w$.}
    \label{figure::confluence-is-necessary-kb}
    \end{figure}
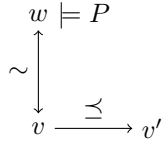
\end{proof}

\begin{proposition}
    \label{prop::confluences-are-equivalent}
    Let $M=\tuple{W, W^\bot, \preceq, \sim, V}$ be a $\ck$-model where $\sim$ is a symmetric relation over $W$.
    Then $\sim$ is forward confluent iff $\sim$ is backward confluent.
\end{proposition}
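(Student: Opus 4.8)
The plan is to exploit the fact that, once the modal relation is symmetric, the two diagrams in Figure \ref{figure::confluences} become mirror images of each other: reflecting the forward-confluence diagram across the $R$-edge and then applying symmetry of $\sim$ turns it into the backward-confluence diagram, and vice versa. Concretely, I would prove both implications by essentially the same short argument, with the two endpoints of the $\sim$-edge swapped and two symmetry steps inserted.

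For the forward-to-backward direction, assume $\sim$ is forward confluent and take $w, v, v'$ with $w \sim v$ and $v \preceq v'$; the goal is a world $w'$ with $w \preceq w'$ and $w' \sim v'$. By symmetry $v \sim w$, so forward confluence applied to the pair $v \sim w$, $v \preceq v'$ yields some $u$ with $w \preceq u$ and $v' \sim u$; one further use of symmetry gives $u \sim v'$, so $w' := u$ is as required. The backward-to-forward direction is entirely dual: assuming $\sim$ is backward confluent and given $w \sim v$, $w \preceq w'$, use symmetry to get $v \sim w$, apply backward confluence to $v \sim w$, $w \preceq w'$ to obtain $u$ with $v \preceq u$ and $u \sim w'$, and conclude that $v' := u$ witnesses forward confluence after a final application of symmetry.

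I do not anticipate a genuine obstacle here; the only care needed is bookkeeping, namely correctly matching the universally quantified premises of each confluence condition to the instance at hand and inserting the symmetry steps in the right places. Note that nothing about $\preceq$ (beyond its being a binary relation), about $W^\bot$, or about $V$ enters the argument, so the proposition is really a statement about how a symmetric relation interacts with an arbitrary $\preceq$, and the proof I propose makes that transparent.
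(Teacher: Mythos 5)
Your proof is correct and matches the paper's argument: in each direction you flip the $\sim$-edge by symmetry, apply the assumed confluence property to the reversed pair, and flip back. The paper writes out only the forward-to-backward direction and declares the converse similar, whereas you spell out both, but the underlying idea is identical.
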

\begin{proof}
    Suppose $\sim$ is forward confluent.
    Let $v,v'$, be such that $w\sim v$ and $v\preceq v'$.
    As $\sim$ is a symmetric relation, $v\sim w$.
    By forward confluence, there is $w'$ such that $w\preceq w'$ and $v'\sim w'$.
    Therefore $w \preceq w' \sim v'$.
    The proof that backwards confluence implies forward confluence is similar.
    See also Figure \ref{figure::confluences} for a pictorial proof.
\end{proof}

Note that, by a recent result of de Groot, Shillito, and Clouston, $\ckb$-frames validate the axioms $DP$, $FS$, and $N$.
\begin{theorem}[De Groot, Shillito, Clouston \cite{degroot2024semantical}]
    \label{thm::dgsc}
    Let $M=\tuple{W, W^\bot, \preceq, R, V}$ be a $\ck$-model. Then:
    \begin{itemize}
        \item Suppose that, for all $w,v\in W$, $w R v$, and $v\in W^\bot$ implies $w\in W^\bot$. Then $M,w\models N$ for all $w\in W$.
        \item Suppose that $R$ is forward and backward confluent. Then $M,w\models DP$ and $M,w\models FS$ for all $w\in W$.
    \end{itemize}
\end{theorem}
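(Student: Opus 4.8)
My plan is to verify the three axioms $N$, $DP$, $FS$ separately, working directly from the forcing clauses and, whenever forward confluence is available, invoking Proposition~\ref{lem::classical-diamonds} to read $\Diamond$ classically. For $N = \Diamond\bot\to\bot$ I would show that $M,w\models\Diamond\bot$ implies $w\in W^\bot$ for every $w$, which suffices since $N$ is an implication. Unfolding $M,w\models\Diamond\bot$ and instantiating its $\preceq$-quantifier at $w$ itself (by reflexivity) produces a $u$ with $wRu$ and $u\in W^\bot$; the stated hypothesis on $W^\bot$ — fallibility of an $R$-successor forces fallibility of its predecessor — then yields $w\in W^\bot$, i.e.\ $M,w\models\bot$. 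Note this uses only reflexivity of $\preceq$ and not confluence, which matches the fact that the first bullet does not assume forward confluence (so Proposition~\ref{lem::classical-diamonds} is unavailable there).

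For $DP = \Diamond(\varphi\lor\psi)\to(\Diamond\varphi\lor\Diamond\psi)$ I would fix an arbitrary $w'$ with $M,w'\models\Diamond(\varphi\lor\psi)$ and, by Proposition~\ref{lem::classical-diamonds}, pick $v$ with $w'Rv$ and $M,v\models\varphi\lor\psi$. A case split on the disjunction followed by a second application of Proposition~\ref{lem::classical-diamonds} gives $M,w'\models\Diamond\varphi$ or $M,w'\models\Diamond\psi$, hence $M,w'\models\Diamond\varphi\lor\Diamond\psi$; as $w'$ was arbitrary this gives $M,w\models DP$ for all $w$. This step uses only forward confluence.

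The substantial case is $FS = (\Diamond\varphi\to\Box\psi)\to\Box(\varphi\to\psi)$, where both confluence conditions are used. Fix $w'$ with $M,w'\models\Diamond\varphi\to\Box\psi$; to obtain $M,w'\models\Box(\varphi\to\psi)$ I would fix $v,u$ with $w'\preceq v$ and $vRu$, then $u'$ with $u\preceq u'$ and $M,u'\models\varphi$, aiming at $M,u'\models\psi$. Applying backward confluence to $vRu\preceq u'$ yields $v'$ with $v\preceq v'$ and $v'Ru'$, so $w'\preceq v'$ by transitivity of $\preceq$. Since $v'Ru'$ and $M,u'\models\varphi$, Proposition~\ref{lem::classical-diamonds} gives $M,v'\models\Diamond\varphi$; feeding $v'$ into the hypothesis then gives $M,v'\models\Box\psi$; and evaluating that box along the edge $v'Ru'$ (using reflexivity at $v'$) gives $M,u'\models\psi$, as needed. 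The main obstacle is steering this quantifier chase correctly: the key point is that backward confluence is precisely what transports the $\preceq$-step from $u$ to $u'$ back across the $R$-edge, producing a world $v'\succeq w'$ at which $\Diamond\varphi$ holds — this last being where forward confluence enters, through Proposition~\ref{lem::classical-diamonds} — and at which the antecedent $\Diamond\varphi\to\Box\psi$ can be discharged to reach $\Box\psi$ and hence $\psi$ at $u'$.
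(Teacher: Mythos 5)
Your proof is correct. Note that the paper itself gives no proof of this theorem: it is stated as an imported result with a citation to De Groot, Shillito and Clouston, so your argument is a self-contained verification rather than a variant of something in the text. All three verifications check out against the forcing clauses: for $N$ you rightly observe that only reflexivity of $\preceq$ is needed (instantiate the $\Diamond\bot$ clause at $w$ itself and push fallibility back along $R$ via the hypothesis), matching the fact that the first bullet assumes no confluence; for $DP$ the two applications of Proposition~\ref{lem::classical-diamonds} are exactly where forward confluence enters; and for $FS$ the quantifier chase is handled correctly — backward confluence transports $u\preceq u'$ across the $R$-edge to produce $v'\succeq w'$ with $v'Ru'$, forward confluence (via Proposition~\ref{lem::classical-diamonds}) upgrades the witness $u'$ to $M,v'\models\Diamond\varphi$, and the implication clause at $w'$ is instantiated directly at $v'$ without any separate appeal to persistence. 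One small point worth making explicit in a write-up: in each case you prove the relevant implication \emph{at every world}, which is what the $\preceq$-quantified clause for $\to$ actually demands, and you do handle this correctly by taking $w'$ arbitrary. This is essentially the standard soundness argument for the Fischer Servi conditions (cf.\ Simpson's thesis), so it is the expected route; the cited paper's contribution is the converse (necessity) direction, which is not claimed here.
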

\begin{corollary}
    $\ckb\models FS$, $\ckb\models DP$, and $\ckb\models N$.
\end{corollary}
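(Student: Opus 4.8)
The plan is to obtain this as an immediate consequence of Theorem \ref{thm::dgsc}. Let $M = \tuple{W, W^\bot, \preceq, \sim, V}$ be an arbitrary $\ckb$-model and fix $w \in W$; by definition the modal relation $\sim$ is symmetric, forward confluent, and backward confluent.

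For $DP$ and $FS$ I would simply invoke the second bullet of Theorem \ref{thm::dgsc}: forward and backward confluence of $\sim$ are exactly its hypotheses, so $M, w \models DP$ and $M, w \models FS$.

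For $N$ I first check the hypothesis of the first bullet of Theorem \ref{thm::dgsc}, namely that $v \sim u$ and $u \in W^\bot$ imply $v \in W^\bot$ for all $v, u \in W$. This is where symmetry does the work: from $v \sim u$ we get $u \sim v$, and then the $\ck$-model requirement that $W^\bot$ is closed along the modal relation (if $u \in W^\bot$ and $u \sim v$, then $v \in W^\bot$) gives $v \in W^\bot$. With this in hand the first bullet yields $M, w \models N$. Since $M$ and $w$ were arbitrary, $\ckb \models FS$, $\ckb \models DP$, and $\ckb \models N$.

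There is essentially no obstacle here; the only point worth noting is that symmetry of the modal relation upgrades the single-directional closure condition on $W^\bot$ built into $\ck$-models into precisely the condition needed to apply the $N$-clause of Theorem \ref{thm::dgsc}. One could alternatively try to derive $\ckb \models N$ from the syntactic fact $\ckb \vdash N$ of Theorem \ref{thm::ads}, but that would require soundness of $\ckb$ over $\ckb$-models, which is not yet available at this point, so the direct route via Theorem \ref{thm::dgsc} is preferable.
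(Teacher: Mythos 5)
Your proposal is correct and matches the paper's own argument: $DP$ and $FS$ follow from the confluence clause of Theorem \ref{thm::dgsc}, and $N$ follows from its first clause after using symmetry together with the $\ck$-model closure of $W^\bot$ under the modal relation. No further comment is needed.
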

\begin{proof}
    Let $M=\tuple{W, W^\bot, \preceq, \sim, V}$ be a $\ckb$-model.
    As $\sim$ is forward and backward confluent, $M,w\models DP$ and $M,w\models FS$, for all $w\in W$.
    Furthermore, if $w, v\in W$, $w \sim v$, and $v\in W^\bot$; then $v\sim w$ and so $w\in W^\bot$.
    Therefore $M,w\models N$, for all $w\in W$.
\end{proof}
%%%%%%%%%%%%%%%%%%%%%%%%%%%%%%%%%%%%%%%%%%%%%%%%%%%%%%%%%%%%%%%

%%%%%%%%%%%%%%%%%%%%%%%%%%%%%%%%%%%%%%%%%%%%%%%%%%%%%%%%%%%%%%%
\section{Completeness for \texorpdfstring{$\ckb$}{CKB} and \texorpdfstring{$\ikb$}{IKB}}
\label{sec::completeness}
%%%%%%%%%%%%%%%%%%%%%%%%%%%%%%%%%%%%%%%%%%%%%%%%%%%%%%%%%%%%%%%
In this section, we prove:
\begin{theorem}
    \label{thm::completeness-CKB-IKB}
    For all modal formula $\varphi$, the following are equivalent:
    \begin{enumerate}
        \item $\ckb\vdash\varphi$;
        \item $\ikb\vdash\varphi$;
        \item $\ckb\models\varphi$; and
        \item $\ikb\models\varphi$.
    \end{enumerate}
\end{theorem}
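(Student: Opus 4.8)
The plan is to establish the cycle $(1)\Rightarrow(2)\Rightarrow(4)\Rightarrow(1)$ together with $(1)\Leftrightarrow(3)$, combining soundness of $\ckb$, the already-known completeness of $\ikb$, and a canonical-model argument. Two of the implications are immediate: $(1)\Rightarrow(2)$ holds because $\ckb\subseteq\ikb$ by definition, and $(3)\Rightarrow(4)$ holds because every $\ikb$-model is a $\ckb$-model (namely one with $W^\bot=\emptyset$), so validity over all $\ckb$-models entails validity over all $\ikb$-models. Moreover $(2)\Rightarrow(4)$ is just the soundness half of Simpson's completeness theorem for $\ikb$. What is left to prove is soundness of $\ckb$, i.e.\ $(1)\Rightarrow(3)$, and the main implication $(4)\Rightarrow(1)$; together with the trivial ones these yield $(1)\Leftrightarrow(2)\Leftrightarrow(3)\Leftrightarrow(4)$.

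Soundness of $\ckb$ with respect to $\ckb$-models (Lemma~\ref{lem::soundness}) is proved by the usual induction on derivations; the only new cases are $B_\Box$ and $B_\Diamond$. For $B_\Box := P\to\Box\Diamond P$ one uses forward confluence of $\sim$ — so that diamonds may be read classically via Proposition~\ref{lem::classical-diamonds} — together with symmetry; for $B_\Diamond := \Diamond\Box P\to P$ one uses symmetry of $\sim$ and reflexivity of $\preceq$ (no confluence is needed here).

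The heart of the proof is $(4)\Rightarrow(1)$. Assume $\ckb\not\vdash\varphi$ and build the canonical model $M_c$ for $\ckb$: worlds are the prime $\ckb$-theories, $\preceq_c$ is inclusion, $V_c(P)=\{w : P\in w\}$, and $R_c$ is the canonical modal relation, defined so as to fit the constructive reading of both $\Box$ and $\Diamond$ — concretely, $w\mathrel{R_c}v$ records that $\{\theta : \Box\theta\in w\}\subseteq v$ and, dually, that $\Diamond\theta\in w$ for every $\theta\in v$. A Lindenbaum / prime-extension argument gives the truth lemma $M_c,w\models\chi\iff\chi\in w$ (the $\Box$- and $\Diamond$-clauses require the care with $\preceq$-successors that is standard for constructive modal logics), so that some prime theory omits $\varphi$ and $M_c\not\models\varphi$. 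It then remains to verify that $M_c$ is an $\ikb$-model:
\begin{itemize}
    \item $W^\bot_c=\emptyset$: prime $\ckb$-theories are consistent, and because $\ckb\vdash N$ (Theorem~\ref{thm::ads}) the set $\{\theta : \Diamond\theta\notin w\}$ always contains $\bot$, which is exactly what prevents the $\Diamond$-case of the truth lemma from ever requiring a fallible successor.
    \item $R_c$ is symmetric: this is forced by $B_\Box$ and $B_\Diamond$, which (for theories containing these axioms and closed under $\mathbf{MP}$) turn the two halves of the definition of $w\mathrel{R_c}v$ into one another with the roles of $w$ and $v$ exchanged.
    \item $R_c$ is forward confluent, hence also backward confluent by Proposition~\ref{prop::confluences-are-equivalent} and symmetry: given $w\mathrel{R_c}v$ and $w\subseteq w'$, the required $v'$ is obtained as a prime extension of $v\cup\{\theta : \Box\theta\in w'\}$ avoiding $\{\theta : \Diamond\theta\notin w'\}$; this uses that the avoided set is closed under disjunction (by $DP$, Theorem~\ref{thm::ads}), that it is avoidable at all (which follows from $w\mathrel{R_c}v$, $w\subseteq w'$, and $K_\Box$, $K_\Diamond$), and that $N$ keeps the resulting theory consistent.
\end{itemize}
Thus $M_c$ is a $\ckb$-model with $W^\bot_c=\emptyset$, i.e.\ an $\ikb$-model, so $\ikb\not\models\varphi$, which closes the cycle.

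The main obstacle is this canonical-model step. Getting $W^\bot_c=\emptyset$ and the symmetry of $R_c$ is fairly direct from $N$, $B_\Box$, and $B_\Diamond$; the delicate points are, first, choosing $R_c$ so that both modal clauses of the truth lemma go through in the intuitionistic semantics, and, second, proving confluence of $R_c$, which is the genuinely new verification and where $DP$ enters. As a by-product the theorem gives $\ckb\vdash FS$, completing the line of results of Arisaka, Das and Stra\ss burger and of de Groot, Shillito and Clouston.
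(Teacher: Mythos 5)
Your proposal is correct and follows essentially the same route as the paper: soundness of $\ckb$ over $\ckb$-models plus the canonical model of $\ckb$-theories shown to be an $\ikb$-model (symmetry of $\sim_c$ from $B_\Box$ and $B_\Diamond$, confluence via Proposition~\ref{prop::confluences-are-equivalent}, truth lemma, and the trivial inclusions). The only cosmetic difference is that you verify forward confluence directly (using $DP$, $N$, and the $K$ axioms) where the paper verifies backward confluence directly and transfers it by symmetry; both verifications go through.
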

\begin{proof}   
    The implications $(1\Rightarrow 2)$ and $(3\Rightarrow 3)$ follow from the definitions: $\ckb$ is a subset of $\ikb$ and every $\ikb$-model is also a $\ckb$-model.
    The implications $(1\Rightarrow 3)$ and $(2\Rightarrow 4)$ follow from Lemma \ref{lem::soundness}.
    The implication $(4\Rightarrow 1)$ is the key implication of the proof; it follows from Lemma \ref{lem::completeness-CKB}.
    We outline the proof of this theorem in Figure \ref{fig::outline-completeness}.
\end{proof}
\noindent Note that the equivalence between items $(2)$ and $(4)$ was already proved by Simpson \cite{simpson1994proof}.

\begin{figure}
\centering
\begin{tikzpicture}
    \node (i-proof) at (0,0) {$\ikb\vdash\varphi$};
    \node (c-proof) at (5,0) {$\ckb\vdash\varphi$};
    \node (i-model) at (0,-2) {$\ikb\models\varphi$};
    \node (c-model) at (5,-2) {$\ckb\models\varphi$};

    \draw[->] (i-proof) -- (i-model) node[midway,left] {Lemma \ref{lem::soundness}};
    \draw[->] (c-proof) -- (c-model) node[midway,right] {Lemma \ref{lem::soundness}};
    \draw[->] (c-proof) -- (i-proof) node[midway,above] {By definition.};
    \draw[->] (c-model) -- (i-model) node[midway,below] {By definition.};
    \draw[->] (i-model) -- (c-proof) node[midway,sloped, above] {Lemma \ref{lem::completeness-CKB}};
\end{tikzpicture}
\caption{The outline of the proof of Theorem \ref{thm::completeness-CKB-IKB}.}
\label{fig::outline-completeness}
\end{figure}
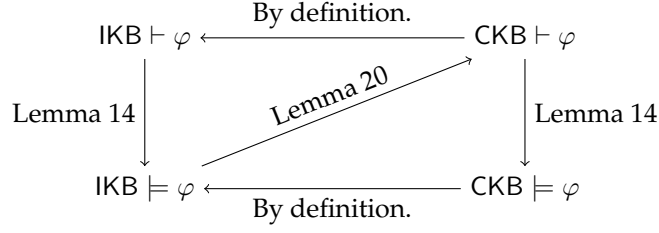

\subsection{Soundness}
We first prove that $\ckb$ is sound with respect to $\ckb$-models and that $\ikb$ is sound with respect to $\ikb$-models.
\begin{lemma}
    \label{lem::soundness}
    Let $\varphi$ be a formula. If $\ckb\vdash\varphi$, then $\ckb\models\varphi$; and, if $\ikb\vdash\varphi$, then $\ikb\models\varphi$.
\end{lemma}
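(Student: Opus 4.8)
The plan is to prove soundness by the standard induction on the length of derivations, treating the two logics uniformly. Since $\ckb$ and $\ikb$ are defined as the least logics containing their respective axioms and closed under $\mathbf{Nec}$ and $\mathbf{MP}$, it suffices to check three things for each logic $\Lambda\in\{\ckb,\ikb\}$: (a) every axiom of $\Lambda$ is valid on every $\Lambda$-model, (b) validity on a class of models is preserved by $\mathbf{MP}$, and (c) validity is preserved by $\mathbf{Nec}$. Steps (b) and (c) are entirely routine: for $\mathbf{MP}$, if $M,v\models\varphi$ and $M,v\models\varphi\to\psi$ for all worlds $v$ of every model $M$ in the class, then taking $w\preceq w$ gives $M,w\models\psi$; for $\mathbf{Nec}$, if $M,v\models\varphi$ at every world, then vacuously $M,w\models\Box\varphi$ everywhere since all $R$-successors of $\preceq$-successors satisfy $\varphi$.

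So the real content is step (a). First I would recall that all intuitionistic tautologies are valid on every $\ck$-model — this is the standard fact that the $\preceq$-semantics for the propositional connectives is just intuitionistic Kripke semantics, with the proviso that fallible worlds satisfy everything (so $\bot$ behaves correctly relative to the closure conditions on $W^\bot$ and $V$). Next, $K_\Box$ and $K_\Diamond$: for $K_\Box$, assume $M,w\models\Box(\varphi\to\psi)$ and work at some $w'\succeq w$ with $M,w'\models\Box\varphi$; any $v,u$ with $w'\preceq v\mathbin{R}u$ satisfy $M,u\models\varphi\to\psi$ and $M,u\models\varphi$, hence $M,u\models\psi$ (using reflexivity of $\preceq$ at $u$), so $M,w'\models\Box\psi$. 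For $K_\Diamond$ the argument is similar but uses forward confluence, which is available in both $\ckb$- and $\ikb$-models; here it is cleanest to use the raw $\Diamond$-clause rather than Proposition~\ref{lem::classical-diamonds}, as the remark before the lemma suggests. Then I would verify the $\ikb$-only axioms $FS$, $DP$, and $N$ directly on $\ikb$-models — but in fact the Corollary after Theorem~\ref{thm::dgsc} already gives $\ckb\models FS,DP,N$, and since every $\ikb$-model is a $\ckb$-model this is immediate; alternatively one cites Theorem~\ref{thm::dgsc} itself, whose hypotheses (forward and backward confluence; $W^\bot$ closed under $R$, which holds trivially when $W^\bot=\emptyset$) are met.

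The remaining axioms are $B_\Box$ and $B_\Diamond$, and checking these is where the geometry of the models actually gets used. For $B_\Box=P\to\Box\Diamond P$: assume $M,w\models P$, take $w'\succeq w$ (so $M,w'\models P$ by monotonicity) and any $v,u$ with $w'\preceq v\sim u$; I must show $M,u\models\Diamond P$, i.e. for every $u''\succeq u$ there is $u'$ with $u''\mathbin\sim u'$ and $M,u'\models P$ — wait, more carefully, $M,u\models\Diamond P$ unfolds to: for all $u'$ there is $u''$ with $u'\mathbin\sim u''$ and $M,u''\models P$. Given $u'$, since $u\sim v$ (symmetry) and... the cleanest route is: $w'\preceq v$ and $v\sim u$, so by backward confluence applied to $v\sim u$ wait I need $u\preceq u'$ — so take any $u'\succeq u$; from $v\sim u\preceq u'$ backward confluence gives $v'$ with $v\preceq v'\sim u'$; but I want to land in $P$, and $v\succeq w'$ only going the wrong way. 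Let me instead use: $w'\preceq v$ means I should push $P$ forward; actually $M,w'\models P$ and $w'\preceq v$ give $M,v\models P$. Now $v\sim u\preceq u'$: backward confluence yields $v'$ with $v\preceq v'\sim u'$, and $M,v'\models P$ by monotonicity; since $\sim$ is symmetric, $u'\sim v'$, witnessing $M,u\models\Diamond P$ at the point $u'$. For $B_\Diamond=\Diamond\Box P\to P$: assume $M,w\models\Diamond\Box P$, so (using $w\preceq w$) there is $v$ with $w\sim v$ and $M,v\models\Box P$; by symmetry $v\sim w$, and $v\preceq v$, so $M,w\models P$ from the $\Box$-clause at $v$. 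The main obstacle is purely bookkeeping: getting the nesting of $\preceq$- and $\sim$-steps right in $B_\Box$ and invoking backward confluence (and symmetry) at exactly the right junctures — no step is deep, but it is the one place an off-by-one in the quantifier alternation would break the argument, so I would lay it out with an explicit diagram mirroring Figure~\ref{figure::confluences}.
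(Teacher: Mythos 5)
Your proposal is correct and follows essentially the same route as the paper: delegate the intuitionistic tautologies, $K_\Box$, $K_\Diamond$, $FS$, $DP$, $N$ to standard arguments (or to the corollary of Theorem~\ref{thm::dgsc}), and verify $B_\Box$ and $B_\Diamond$ directly, with your $B_\Box$ argument (monotonicity to get $P$ at $v$, then backward confluence on $v\sim u\preceq u'$ plus symmetry) matching the paper's up to the interchangeable choice of which confluence to invoke (Proposition~\ref{prop::confluences-are-equivalent}). One small inaccuracy worth noting: $K_\Diamond$ does not actually require forward confluence --- it is already sound over arbitrary $\ck$-models under the raw $\Diamond$-clause --- though invoking it is harmless here since it is available.
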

\begin{proof}
    The proof that $\ckb$-models satisfy $K_\Box$ and $K_\Diamond$ and $\ikb$-models further satisfies $DP$, $FS$, and $N$ uses standard arguments.
    See, for example, Simpson's PhD thesis \cite{simpson1994proof} for detailed proofs.
    We only show that $\ckb$-models (and so $\ikb$-models) satisfy $B_\Box$ and $B_\Diamond$.

    Fix a $\ckb$-model $M = \tuple{W, W^\bot, \preceq, \sim, V}$.
    Suppose $w\in W$ and $w\models\Diamond\Box P$.
    Therefore, there is $v$ such that $w\sim v$ and $v\models\Box\varphi$.
    As $\sim$ is symmetric, $v\sim w$ and so $w\models P$.
    
    Suppose that $w\in W$ and $w\models P$.
    We want to show that $M,w\models \Box\Diamond P$.
    That is, we want to show that, for all $v, v', u$ such that $w\preceq v\sim v' \preceq u$, there is $u'$ such that $u\sim u'$ and $M,u'\models P$.
    To see such $u'$ exists, let $v, v', u$ be such that $w\preceq v\sim v' \preceq u$.
    By forward confluence, there is $u'$ such that $v\preceq u'$ and $u\sim u'$.
    Since $w\preceq v \preceq u'$, we have $M,u'\models P$ as we wanted.
\end{proof}

\subsection{Canonical model for \texorpdfstring{$\ckb$}{CKB}}
A \emph{(consistent) $\ckb$-theory} $\Gamma$ is a set of formulas such that:
    $\Gamma$ contains all the axioms of $\ckb$;
    $\Gamma$ is closed under $\mathbf{MP}$;
    if $\varphi\lor\psi\in\Gamma$, then $\varphi\in\Gamma$ or $\psi\in\Gamma$; and
    $\bot\not\in\Gamma$.
If $X$ is a set of formulas, then define $X^\Box := \{\varphi \mid \Box\varphi\in X\}$ and $X^\Diamond := \{\varphi \mid \Diamond\varphi\in X\}$.

Define the $\ckb$-canonical model $M_c := \tuple{W_c, W^\bot_c, \preceq_c,\sim_c, V_c}$ by:
\begin{itemize}
    \item $W_c := \{ \Gamma \mid \Gamma\text{ is a $\ckb$-theory} \}$;
    \item $W^\bot_c = \emptyset$;
    \item $\Gamma \preceq_c \Delta$ iff $\Gamma\subseteq\Delta$;
    \item $\Gamma \sim_c \Delta$ iff $\Gamma^\Box \subseteq \Delta$ and $\Delta\subseteq \Gamma^\Diamond$; and
    \item $\Gamma\in V_c(\varphi)$ iff $P\in\Gamma$.
\end{itemize}

\begin{remark}
    The definition of the $\ckb$-canonical model $M_c$ is standard for \emph{intuitionistic} modal logics.
    In general, using only theories for worlds is not sufficient for constructive modal logics; it works here because $\ckb$ and $\ikb$ coincide.
    One alternative is to pair theories with a set formulas which cannot be satisfiable in accessible worlds, as in \cite{alechina2001categorical,balbiani2021constructive}.
    Another alternative is to pair theories with the set of theories they can access, as in \cite{wijesekera1990constructive,degroot2024semantical}.
\end{remark}

\begin{lemma}
    \label{lem::sim_is_equiv}
    The relation $\sim_c$ is a symmetric relation.
\end{lemma}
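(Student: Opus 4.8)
The goal is to show that the canonical relation $\sim_c$ on $\ckb$-theories is symmetric, i.e.\ that $\Gamma \sim_c \Delta$ implies $\Delta \sim_c \Gamma$. Unfolding the definition, $\Gamma \sim_c \Delta$ means $\Gamma^\Box \subseteq \Delta$ and $\Delta \subseteq \Gamma^\Diamond$, and we must derive $\Delta^\Box \subseteq \Gamma$ and $\Gamma \subseteq \Delta^\Diamond$. Notice that these are exactly the two halves of the desired conclusion, one using $B_\Box$ and one using $B_\Diamond$, so I would split the argument into these two containments.

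For $\Gamma \subseteq \Delta^\Diamond$: take any $\varphi \in \Gamma$. Since $\ckb \vdash B_\Box$, we have $\varphi \to \Box\Diamond\varphi \in \Gamma$, so by closure under $\mathbf{MP}$, $\Box\Diamond\varphi \in \Gamma$, i.e.\ $\Diamond\varphi \in \Gamma^\Box$. From $\Gamma^\Box \subseteq \Delta$ we get $\Diamond\varphi \in \Delta$, which is precisely $\varphi \in \Delta^\Diamond$. For $\Delta^\Box \subseteq \Gamma$: take any $\varphi \in \Delta^\Box$, i.e.\ $\Box\varphi \in \Delta$. From $\Delta \subseteq \Gamma^\Diamond$ we get $\Box\varphi \in \Gamma^\Diamond$, i.e.\ $\Diamond\Box\varphi \in \Gamma$. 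Since $\ckb \vdash B_\Diamond$, the instance $\Diamond\Box\varphi \to \varphi \in \Gamma$, so by $\mathbf{MP}$, $\varphi \in \Gamma$, as required. (I should be mildly careful that $B_\Box$ and $B_\Diamond$ are stated with a propositional variable $P$, but as axiom schemes they instantiate to arbitrary formulas; alternatively the logic is closed under uniform substitution, so this is harmless.)

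This argument is essentially immediate once the definitions are unpacked, so I do not anticipate any genuine obstacle; the only thing to be attentive to is keeping straight which of the four inclusions $\Gamma^\Box \subseteq \Delta$, $\Delta \subseteq \Gamma^\Diamond$, $\Delta^\Box \subseteq \Gamma$, $\Gamma \subseteq \Delta^\Diamond$ is hypothesis and which is goal, and matching each goal to the correct $B$-axiom. The whole point of the canonical model design in this paper is that symmetry of $\sim_c$ comes for free from $B_\Box$ and $B_\Diamond$ together with the closure properties of $\ckb$-theories, and that is exactly what this lemma records.
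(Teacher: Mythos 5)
Your proof is correct and follows essentially the same route as the paper: unfold $\Gamma \sim_c \Delta$ into its two inclusions and derive the two reversed inclusions, using $B_\Diamond$ for $\Delta^\Box \subseteq \Gamma$ and $B_\Box$ for $\Gamma \subseteq \Delta^\Diamond$. In fact you correctly attribute the step $\varphi \in \Gamma \Rightarrow \Box\Diamond\varphi \in \Gamma$ to $B_\Box$, where the paper's text has a small slip and cites $B_\Diamond$ a second time.
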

\begin{proof}
    Suppose $\Gamma$ and $\Delta$ are $\ckb$-theories such that $\Gamma \sim_c \Delta$.
    Let $\Box\varphi\in\Delta$, then $\Diamond\Box\varphi\in\Gamma$ as $\Delta\subseteq \Gamma^\Diamond$.
    By $B_\Diamond$ and $\mathbf{MP}$, $\varphi\in \Gamma$.
    Therefore $\Delta^\Box\subseteq \Gamma$.
    Now, let $\varphi\in\Gamma$.
    Then $\Box\Diamond\varphi\in\Gamma$ by $B_\Diamond$ and $\mathbf{MP}$.
    Thus $\Diamond \varphi\in\Delta$, as $\Gamma^\Box\subseteq\Delta$.
    That is, $\Gamma\subseteq\Delta^\Diamond$.
    As $\Delta^\Box\subseteq \Gamma$ and $\Gamma\subseteq\Delta^\Diamond$, we conclude that $\Delta\sim_c\Gamma$.
\end{proof}

\begin{lemma}
    \label{lem::sim_is_confluent}
    The relation $\sim_c$ is backward confluent.
    That is, if $\Gamma,\Delta,\Sigma\in W_c$ and $\Gamma\sim_c\Delta\preceq_c\Sigma$, then there is $\Theta\in W_c$ such that $\Gamma\preceq_c\Theta\sim_c\Sigma$.
\end{lemma}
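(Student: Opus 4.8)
The plan is to obtain $\Theta$ by a Lindenbaum-style extension, after first pinning down what $\Theta$ has to look like. A prime $\ckb$-theory $\Theta$ satisfies $\Gamma\preceq_c\Theta\sim_c\Sigma$ exactly when $\Gamma\subseteq\Theta$, $\Theta^\Box\subseteq\Sigma$, and $\Sigma\subseteq\Theta^\Diamond$. Using $B_\Diamond$, the second condition follows from $\Theta\subseteq\Sigma^\Diamond$ (if $\Box\varphi\in\Theta\subseteq\Sigma^\Diamond$ then $\Diamond\Box\varphi\in\Sigma$, so $\varphi\in\Sigma$); and using $B_\Box$, the third follows from $\Sigma^\Box\subseteq\Theta$ (if $\varphi\in\Sigma$ then $\Box\Diamond\varphi\in\Sigma$, so $\Diamond\varphi\in\Sigma^\Box\subseteq\Theta$). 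So it suffices to produce a prime $\ckb$-theory $\Theta$ that is sandwiched as $\Gamma\cup\Sigma^\Box\subseteq\Theta\subseteq\Sigma^\Diamond$.

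By the usual Lindenbaum lemma for intuitionistic logics, such a $\Theta$ exists provided the pair consisting of $\Gamma\cup\Sigma^\Box$ and the complement of $\Sigma^\Diamond$ is consistent, i.e.\ for all finite $\Phi'\subseteq\Gamma\cup\Sigma^\Box$ and finite $\Psi'\subseteq\{\theta:\Diamond\theta\notin\Sigma\}$ we have $\ckb\not\vdash\bigwedge\Phi'\to\bigvee\Psi'$. Verifying this is the heart of the matter and the step I expect to fight with. Assume $\ckb\vdash\bigwedge\Phi'\to\bigvee\Psi'$. Conjoin the members of $\Phi'$ lying in $\Gamma$ into a single $\gamma\in\Gamma$ (taking $\gamma:=\top$ if there are none), conjoin the members lying in $\Sigma^\Box$ into a single $\chi$ with $\Box\chi\in\Sigma$ (via $\mathbf{Nec}$ and $K_\Box$), and write $\theta:=\bigvee\Psi'$, so that $\ckb\vdash\chi\to(\gamma\to\theta)$. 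Now three facts collide: (a)~$\Box\chi\in\Sigma$; (b)~$\Diamond\gamma\in\Sigma$, since $\gamma\in\Gamma$ gives $\Box\Diamond\gamma\in\Gamma$ by $B_\Box$, hence $\Diamond\gamma\in\Gamma^\Box\subseteq\Delta\subseteq\Sigma$ using the hypotheses $\Gamma\sim_c\Delta\preceq_c\Sigma$; and (c)~$\Diamond\theta\notin\Sigma$ --- this is where $DP$ (to push $\Diamond$ through the disjunction and then use primeness of $\Sigma$) and $N$ (for the empty disjunction, giving $\Diamond\bot\notin\Sigma$) enter, both being theorems of $\ckb$ by Theorem \ref{thm::ads}. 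From $\ckb\vdash\chi\to(\gamma\to\theta)$, $\mathbf{Nec}$ and $K_\Box$ yield $\Box(\gamma\to\theta)\in\Sigma$, and then $K_\Diamond$ yields $(\Diamond\gamma\to\Diamond\theta)\in\Sigma$; together with (b) this forces $\Diamond\theta\in\Sigma$, contradicting (c).

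Granting the Lindenbaum step, fix a prime $\ckb$-theory $\Theta$ with $\Gamma\cup\Sigma^\Box\subseteq\Theta\subseteq\Sigma^\Diamond$; it is consistent because $\Diamond\bot\notin\Sigma$ by $N$, so $\bot\notin\Sigma^\Diamond\supseteq\Theta$. Then $\Gamma\subseteq\Theta$ gives $\Gamma\preceq_c\Theta$, and the two deductions recorded in the first paragraph give $\Theta^\Box\subseteq\Sigma$ and $\Sigma\subseteq\Theta^\Diamond$, i.e.\ $\Theta\sim_c\Sigma$; so $\Gamma\preceq_c\Theta\sim_c\Sigma$, as required. (Alternatively, one may read the sandwich as the assertion $\Sigma\sim_c\Theta$ and appeal to the symmetry of $\sim_c$ from Lemma \ref{lem::sim_is_equiv}.)
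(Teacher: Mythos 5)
Your proof is correct, and it reaches the conclusion by a route that differs from the paper's in two respects worth noting. First, your opening reduction — using $B_\Diamond$ to get $\Theta^\Box\subseteq\Sigma$ from $\Theta\subseteq\Sigma^\Diamond$ and $B_\Box$ to get $\Sigma\subseteq\Theta^\Diamond$ from $\Sigma^\Box\subseteq\Theta$ — is in effect a re-proof of the symmetry of $\sim_c$ (Lemma \ref{lem::sim_is_equiv}); as your parenthetical notes, you are really constructing $\Theta$ with $\Sigma\sim_c\Theta$ and appealing to symmetry, which turns the target into the clean sandwich $\Gamma\cup\Sigma^\Box\subseteq\Theta\subseteq\Sigma^\Diamond$. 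The paper instead seeds the extension with $\Gamma\cup\{\Diamond\sigma\mid\sigma\in\Sigma\}$, which makes $\Sigma\subseteq\Theta^\Diamond$ hold by fiat, and then proves directly that any $\Box\varphi$ derivable from that seed already has $\varphi\in\Sigma$. Second, because your Lindenbaum step must keep $\Theta$ inside $\Sigma^\Diamond$, your consistency check has to rule out $\Diamond(\bigvee\Psi')\in\Sigma$ for finite avoid-sets $\Psi'$, and this forces you to invoke $\ckb\vdash DP$ (and $N$ for the empty disjunction) via Theorem \ref{thm::ads} together with primeness of $\Sigma$; the paper's version of this lemma needs neither, since it never has to distribute a diamond over a disjunction. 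Both arguments are Zorn-style extensions and both ultimately route the key derivation through $\mathbf{Nec}$, $K_\Box$, $K_\Diamond$, $B_\Box$, $B_\Diamond$ inside $\Sigma$; yours buys a more modular statement of what $\Theta$ must satisfy at the cost of a heavier dependence on Theorem \ref{thm::ads}, while the paper's keeps the lemma self-contained relative to the $B$ and $K$ axioms alone. One cosmetic point: when $\Phi'$ contains no member of $\Sigma^\Box$ you should set $\chi:=\top$ and note $\Box\top\in\Sigma$ by $\mathbf{Nec}$, mirroring what you already do for $\gamma$.
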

\begin{proof}
    Suppose $\Gamma\sim_c\Delta\preceq_c\Sigma$.
    By definition, $\Gamma^\Box\subseteq\Delta$, $\Delta\subseteq \Gamma^\Diamond$, and $\Delta\subseteq \Sigma$.
    Let $\Upsilon$ be the closure of $\Gamma \cup \{\Diamond\varphi \mid \varphi\in\Sigma\}$ under $\mathbf{MP}$.
    
    We first show that, if $\Box\varphi$ is a provable formula in $\Upsilon$, then $\varphi\in\Sigma$.
    There are formulas $\psi\in \Gamma$ and $\chi_0,\dots, \chi_n\in \Sigma$ such that
    \[
        \ckb \vdash (\bigwedge_{j<n}\Diamond\chi_j) \land \psi \to \Box \varphi.
    \]
    By $\mathbf{Nec}$ and $K$,
    \[
        \ckb \vdash (\bigwedge_{j<n}\Box\Diamond\chi_j) \to \Box( \psi \to \Box \varphi)
    \] and so
    \[
        \ckb\vdash (\bigwedge_{j<n}\Box\Diamond\chi_j) \to (\Diamond\psi \to \Diamond\Box \varphi).
    \]
    Since each $\chi_j$ is in $\Sigma$, so are the formulas $\Box\Diamond\chi_j$, by $B_\Box$ along with $\mathbf{MP}$.
    Since $\psi\in\Gamma$, $\Diamond\psi\in\Delta$, and thus $\Diamond\psi\in\Sigma$ too.
    By repeated applications of $\mathbf{MP}$, we have $\Diamond\Box\varphi\in\Sigma$.
    By $B_\Diamond$, we have $\varphi\in\Sigma$.

    Furthermore, $\bot\not\in\Upsilon$. To see that, suppose otherwise, then $\Box\bot\in\Upsilon$ by $\mathbf{MP}$, and so $\bot\in\Sigma$, which is impossible as $\Sigma$ is a $\ckb$-theory.

    Therefore $\Upsilon$ is a set such that: $\Gamma\subseteq\Upsilon$, $\Upsilon^\Box\subseteq\Sigma$, $\Sigma\subseteq\Upsilon^\Diamond$, and $\bot\not\in\Upsilon$.
    $\Upsilon$ might not be a theory, but we can extend it to a theory using Zorn's Lemma.
    
    To see so, let $\Theta$ be the maximal (with respect to the subset relation) set of formulas such that: $\Gamma\subseteq\Upsilon$, $\Upsilon^\Box\subseteq\Sigma$, $\Sigma\subseteq\Upsilon^\Diamond$, and $\bot\not\in\Upsilon$.
    Suppose $\varphi\lor\psi\in\Theta$.
    Then if $\varphi\not\in\Theta$ and $\psi\not\in\Theta$, we would have that $\neg\varphi\in\Theta$ and $\neg\psi\in\Theta$. 
    By $\mathbf{MP}$, we would have $\neg(\varphi\lor\psi)\in \Theta$, a contradiction.
    Therefore $\Theta$ is a $\ckb$-theory.
    By construction, we have that $\Gamma\preceq_c\Theta\sim_c\Sigma$.
\end{proof}

\begin{lemma}
    \label{lem::mc-is-canonical}
    The $\ckb$-canonical model $M_c$ is an $\ikb$-model.
\end{lemma}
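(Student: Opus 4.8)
The plan is to verify that $M_c = \tuple{W_c, W_c^\bot, \preceq_c, \sim_c, V_c}$ satisfies every clause in the definition of an $\ikb$-model, using the characterization ``$\ikb$-model iff $\ckb$-model with $W^\bot = \emptyset$''. Since $W_c^\bot = \emptyset$ by construction, it remains to check that $M_c$ is a genuine $\ckb$-model: that $\preceq_c$ is reflexive and transitive, that $V_c$ is monotone along $\preceq_c$, that the fallibility constraints hold (vacuously, since $W_c^\bot = \emptyset$, but one must still confirm there is no issue with $W^\bot \subseteq V(P)$ — again vacuous), and above all that $\sim_c$ is symmetric, forward confluent, and backward confluent.

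First I would dispatch the intuitionistic skeleton: $\preceq_c$ is just $\subseteq$ on $\ckb$-theories, so reflexivity and transitivity are immediate, and monotonicity of $V_c$ is immediate since $\Gamma \subseteq \Delta$ and $P \in \Gamma$ give $P \in \Delta$. The condition $W_c^\bot \subseteq V_c(P)$ and the closure of $W_c^\bot$ under $\preceq_c$ and $\sim_c$ are all vacuous. Next, symmetry of $\sim_c$ is exactly Lemma \ref{lem::sim_is_equiv}, and backward confluence of $\sim_c$ is exactly Lemma \ref{lem::sim_is_confluent}. By Proposition \ref{prop::confluences-are-equivalent}, once $\sim_c$ is symmetric, backward confluence implies forward confluence, so forward confluence comes for free. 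Hence all three relational properties required of $\sim_c$ in a $\ckb$-model are in hand.

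Putting these together: $M_c$ is a $\ck$-model whose modal relation $\sim_c$ is symmetric (Lemma \ref{lem::sim_is_equiv}), backward confluent (Lemma \ref{lem::sim_is_confluent}), and therefore forward confluent (Proposition \ref{prop::confluences-are-equivalent}); thus $M_c$ is a $\ckb$-model. Since additionally $W_c^\bot = \emptyset$, the equivalent characterization in the definition of $\ikb$-model tells us $M_c$ is an $\ikb$-model. There is essentially no main obstacle at this stage — the lemma is a bookkeeping corollary of the three preceding lemmas plus Proposition \ref{prop::confluences-are-equivalent}; the real work (the backward-confluence construction via Zorn's Lemma) has already been done in Lemma \ref{lem::sim_is_confluent}. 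The only thing to be slightly careful about is making sure the ``$\ck$-model'' axioms proper (reflexivity/transitivity of $\preceq_c$, monotonicity of $V_c$) are explicitly noted, since they are prerequisites before one may even speak of $\ckb$- or $\ikb$-models.
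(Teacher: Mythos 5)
Your proposal is correct and follows essentially the same route as the paper: invoke Lemma \ref{lem::sim_is_equiv} for symmetry, Lemma \ref{lem::sim_is_confluent} for one direction of confluence, Proposition \ref{prop::confluences-are-equivalent} for the other, and observe that $W_c^\bot=\emptyset$. Your version is slightly more careful in explicitly checking the $\ck$-model axioms (reflexivity/transitivity of $\preceq_c$, monotonicity of $V_c$), which the paper dispatches with ``by definition,'' and you also state the direction of confluence supplied by Lemma \ref{lem::sim_is_confluent} (backward) more accurately than the paper's own wording does.
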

\begin{proof}
    By definition, $M_c$ is an $\ck$-model with $W^\bot = \emptyset$.
    By Lemmas \ref{lem::sim_is_equiv} and \ref{lem::sim_is_confluent}, $\sim$ is symmetric and forward confluent. By Proposition \ref{prop::confluences-are-equivalent}, $\sim$ is backward confluent too.
\end{proof}

In the next lemma, we isolate the trickier application of Zorn's Lemma in the Truth Lemma:
\begin{lemma}
    \label{lem::verbose-proof-for-box}
    Let $\varphi$ be a formula and $\Gamma$ be a $\ckb$-theory.
    Then $\Box\varphi\not\in\Gamma$ implies that there are $\ckb$-theories $\Delta$ and $\Sigma$ such that $\Gamma\preceq_c\Delta\sim_c\Sigma$ and $\varphi\not\in\Sigma$.
\end{lemma}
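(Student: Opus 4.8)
The plan is to prove the contrapositive in the standard modal-logic style: from $\Box\varphi\notin\Gamma$, build the required $\Delta$ and $\Sigma$ by a maximal-consistent-set construction, but — as the lemma's name warns — the consistency check for the candidate $\Sigma$ is the delicate point, and it is exactly there that forward confluence of $\sim_c$ (equivalently backward confluence, Lemma~\ref{lem::sim_is_confluent}) must be invoked to keep $\varphi$ out. Concretely, first I would set $\Delta := \Gamma$ itself (since $\preceq_c$ is reflexive, $\Gamma\preceq_c\Gamma$), so the task reduces to producing a $\ckb$-theory $\Sigma$ with $\Gamma\sim_c\Sigma$ and $\varphi\notin\Sigma$. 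For that, consider the set $\Upsilon := \{\psi \mid \Box\psi\in\Gamma\}\cup\{\neg\Diamond\chi \mid \Diamond\chi\notin\Gamma\}$ — heuristically, the ``$\Box$-consequences forced into $\Sigma$'' together with a demand that $\Sigma$ not be too big for $\Gamma^\Diamond$ to contain it — and close it under $\mathbf{MP}$; the goal is to show $\varphi$ is not provable from $\Upsilon$, so that $\Upsilon\cup\{\neg\varphi\}$ is consistent.

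The heart of the argument is the claim that $\varphi\notin(\text{closure of }\Upsilon)$. Suppose otherwise: then there are finitely many $\psi_i$ with $\Box\psi_i\in\Gamma$ and finitely many $\chi_j$ with $\Diamond\chi_j\notin\Gamma$ such that $\ckb\vdash\bigwedge_i\psi_i\wedge\bigwedge_j\neg\Diamond\chi_j\to\varphi$. I would push this through $\mathbf{Nec}$ and the $K$ axioms — transforming $\bigwedge\Box\psi_i$ into $\Box(\text{premise}\to\varphi)$ and then peeling a $\Box$ onto $\varphi$ — to derive, using $B_\Box$ and $B_\Diamond$ roughly as in the proof of Lemma~\ref{lem::sim_is_confluent}, that $\Box\varphi\in\Gamma$ provided each $\Box\neg\Diamond\chi_j$ (or equivalently the relevant negated-diamond fact) is available in $\Gamma$; the fact that $\Diamond\chi_j\notin\Gamma$ is what supplies this. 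This contradicts $\Box\varphi\notin\Gamma$. The same kind of computation also shows $\bot\notin\Upsilon$, so $\Upsilon\cup\{\neg\varphi\}$ is a consistent set with $\Gamma^\Box\subseteq\Upsilon$ and (via the $\neg\Diamond\chi$ part) no $\chi\in\Upsilon$ with $\Diamond\chi\notin\Gamma$, i.e.\ the eventual $\Sigma$ will satisfy $\Sigma\subseteq\Gamma^\Diamond$.

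Finally, I would extend $\Upsilon\cup\{\neg\varphi\}$ to a maximal set $\Sigma$ of formulas still satisfying the constraints ``$\Gamma^\Box\subseteq\Sigma$, $\Sigma\subseteq\Gamma^\Diamond$, $\bot\notin\Sigma$, $\varphi\notin\Sigma$'' via Zorn's Lemma, exactly as in Lemma~\ref{lem::sim_is_confluent}: maximality forces the disjunction property (if $\psi_1\vee\psi_2\in\Sigma$ but neither disjunct is in, both $\neg\psi_1,\neg\psi_2$ could be added, yielding $\neg(\psi_1\vee\psi_2)\in\Sigma$ and a contradiction), closure under $\mathbf{MP}$ is immediate, and consistency is built in; hence $\Sigma$ is a $\ckb$-theory with $\Gamma\sim_c\Sigma$ and $\varphi\notin\Sigma$. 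Taking $\Delta=\Gamma$ completes the proof. The main obstacle I anticipate is getting the modal bookkeeping in the key claim exactly right — in particular choosing the ``negative'' generators for $\Upsilon$ so that the combination of $K_\Box$, $K_\Diamond$, $B_\Box$, and $B_\Diamond$ genuinely forces $\Box\varphi\in\Gamma$ from the assumed derivation; a cleaner alternative, if that is awkward, is to instead build $\Sigma$ by extending $\Gamma^\Box\cup\{\neg\varphi\}$ to a theory and then separately (again by Zorn) thickening $\Gamma$ to a $\Delta$ witnessing $\Gamma\preceq_c\Delta$ with $\Delta^\Box$ still avoiding $\varphi$, using forward confluence of $\sim_c$ to reconcile the two — but I expect the direct route above to be shortest.
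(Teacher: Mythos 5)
There is a genuine gap, and it is exactly at the point you flagged as the reduction step: setting $\Delta:=\Gamma$ does not work. The conclusion of the lemma deliberately passes through an intermediate theory $\Delta$ with $\Gamma\preceq_c\Delta$, because $\Gamma$ itself may have \emph{no} $\sim_c$-successor at all. Concretely, $\ckb\not\vdash\Box P\to\Diamond P$ (the logic $\mathsf{KB}$ is not serial: in the $\ckb$-model with worlds $w\preceq v$, $v\sim u$, $u\in V(P)$, and no $R$-edges out of $w$, we get $w\models\Box P$ but $w\not\models\Diamond P$ and $w\not\models\Box Q$). The theory $\Gamma$ of such a world is a $\ckb$-theory containing $\Box P$ but not $\Diamond P$ and not $\Box Q$. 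Any $\Sigma$ with $\Gamma\sim_c\Sigma$ must satisfy $P\in\Gamma^\Box\subseteq\Sigma\subseteq\Gamma^\Diamond$, forcing $\Diamond P\in\Gamma$ --- impossible. So for this $\Gamma$ and $\varphi=Q$ your construction cannot produce the required $\Sigma$, while the lemma is still true (via a proper extension $\Delta\supsetneq\Gamma$). The paper's proof builds $\Delta$ as (an extension of) the closure of $\Gamma\cup\{\Diamond\psi\mid\psi\in\Upsilon\}$, where $\Upsilon$ is the closure of $\Gamma^\Box$: the whole point is to inject into $\Delta$ the diamonds of everything destined for $\Sigma$, so that $\Sigma\subseteq\Delta^\Diamond$ can hold, and then to run Zorn's Lemma on \emph{pairs} $\tuple{X,Y}$ subject to the side conditions $Y^\Box\subseteq X$, $Y\subseteq X^\Diamond$, $\varphi\notin X$, $\bot\notin X\cup Y$, extending both sets simultaneously. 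Your closing ``cleaner alternative'' gestures at this, but it is not carried out, and the primary route you propose fails.

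Two further problems with the mechanism you chose. First, your key consistency claim leans on the step ``the fact that $\Diamond\chi_j\notin\Gamma$ is what supplies'' $\Box\neg\Diamond\chi_j\in\Gamma$ (or $\neg\Diamond\chi_j\in\Gamma$). It does not: $\ckb$-theories are prime but not complete, so $\Diamond\chi\notin\Gamma$ gives no negative information inside $\Gamma$, and the derivation collapses. Second, seeding $\Sigma$ with $\neg\Diamond\chi$ for $\Diamond\chi\notin\Gamma$ is the wrong way to enforce $\Sigma\subseteq\Gamma^\Diamond$: having $\neg\Diamond\chi\in\Sigma$ does not prevent $\chi\in\Sigma$, and it creates new obligations (e.g.\ $\Diamond\neg\Diamond\chi$ would have to lie in the predecessor theory). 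The containment $\Sigma\subseteq\Delta^\Diamond$ must be maintained as an explicit side condition preserved by the maximalization, not encoded by formulas. Finally, a small conceptual correction: no confluence of $\sim_c$ is used in this lemma; the paper's argument is purely syntactic, combining $\mathbf{Nec}$, $K_\Box$, $K_\Diamond$, $B_\Box$, $B_\Diamond$, and $\ckb\vdash N$ to show that $\varphi$ and $\bot$ stay out of the relevant closures.
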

\begin{proof}
    Suppose $\Box\varphi\not\in\Gamma$.
    Let $\Upsilon$ be the closure under $\mathbf{MP}$ of $\Gamma^\Box$ and $\Phi$ be the closure under $\mathbf{MP}$ of $\Gamma \cup \{\Diamond\varphi \mid \psi\in\Upsilon\}$.
    By the choice of $\Upsilon$, $\varphi\not\in\Upsilon$ and $\bot\not\in\Upsilon$, otherwise $\Box\varphi\in\Gamma$.
    We also have that $\bot\not\in\Phi$.
    Suppose otherwise, then there are formulas $\chi\in\Gamma$ and $\psi_0,\dots,\psi_n\in\Upsilon$ such that
    \[
        \ckb\vdash (\bigwedge_{i<n}\Diamond\psi_i) \to (\chi\to \bot).
    \]
    By $\mathbf{Nec}$ along applications of $K_\Box$ and $K_\Diamond$ with $\mathbf{MP}$, we have
    \[
        \ckb\vdash (\bigwedge_{i<n}\Box\Diamond\psi_i) \to (\Diamond\chi\to \Diamond\bot).
    \]
    Since each $\psi_i$ is in $\Upsilon$, so are the $\Box\Diamond\psi_i$ in $\Upsilon$.
    Since $\chi\in\Gamma$, $\Box\Diamond\chi\in\Gamma$ too, and so $\Diamond\chi\in\Upsilon$.
    Therefore $\Diamond\bot \in \Upsilon$, and so $\bot\in\Upsilon$ too since $\ckb\vdash N$; this is a contradiction.

    Consider now the pairs of sets of formulas $\tuple{X,Y}$ such that $\Upsilon\subseteq X$, $\Phi\subseteq Y$, $\Gamma\subseteq Y$, $Y^\Box\subseteq X$, $Y\subseteq X^\Diamond$, $\varphi\not\in X\cup Y$, and $\bot\not\in X\cup Y$.
    Consider the ordering $\leq$ where $\tuple{X,Y}\leq \tuple{X',Y'}$ iff $X\subseteq X'$ and $Y\subseteq Y'$.
    Note that $\tuple{\Upsilon,\Phi}$ is a pair which satisfies these properties.
    So, by Zorn's Lemma, there is a pair $\tuple{\Sigma,\Delta}$ which is maximal with respect to $\leq$.
    As in the proof of Lemma \ref{lem::sim_is_confluent}, if $\varphi\lor\psi\in \Sigma$ then $\varphi\in\Sigma$ or $\psi\in\Sigma$; and if $\varphi\lor\psi\in \Delta$ then $\varphi\in\Delta$ or $\psi\in\Delta$.
    Therefore $\Sigma$ and $\Delta$ must be $\ckb$-theories.
    Furthermore, $\Gamma\preceq_c\Delta\sim_c\Sigma$ and $\varphi\not\in\Sigma$ as we wanted.
\end{proof}

\begin{lemma}
    \label{lem::truth-lemma-CKB}
    Let $M_c$ be the $\ckb$-canonical model.
    For all formula $\varphi$ and for all $\ckb$-theory $\Gamma$,
    \[
        M_c,\Gamma\models \varphi \text{ iff } \varphi\in\Gamma.
    \]
\end{lemma}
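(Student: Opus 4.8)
The plan is to prove the Truth Lemma by induction on the structure of $\varphi$. The atomic case $\varphi = P$ is immediate from the definition of $V_c$, and the case $\varphi = \bot$ holds because $W^\bot_c = \emptyset$ and $\bot$ is in no $\ckb$-theory. The propositional connectives $\land$, $\lor$, and $\to$ are handled by the standard arguments for intuitionistic canonical models: $\land$ and $\lor$ are routine (using that theories are closed under $\mathbf{MP}$ and have the disjunction property for the $\lor$ case), and $\to$ uses the usual Lindenbaum-style construction, namely that if $\psi \to \chi \notin \Gamma$ then $\Gamma \cup \{\psi\}$ extends to a $\ckb$-theory $\Delta \supseteq \Gamma$ (so $\Gamma \preceq_c \Delta$) with $\psi \in \Delta$ and $\chi \notin \Delta$, which I would cite as standard rather than reprove.

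The modal cases are the heart of the argument. For $\Box\varphi$: the forward direction is easy — if $\Box\varphi \in \Gamma$ and $\Gamma \preceq_c \Delta \sim_c \Sigma$, then $\Box\varphi \in \Delta$ by $\preceq_c$, hence $\varphi \in \Delta^\Box \subseteq \Sigma$ by $\sim_c$, so $M_c,\Sigma \models \varphi$ by the induction hypothesis. For the converse, if $\Box\varphi \notin \Gamma$, Lemma \ref{lem::verbose-proof-for-box} hands us $\ckb$-theories $\Delta, \Sigma$ with $\Gamma \preceq_c \Delta \sim_c \Sigma$ and $\varphi \notin \Sigma$; by the induction hypothesis $M_c,\Sigma \not\models \varphi$, so $M_c,\Gamma \not\models \Box\varphi$. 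For $\Diamond\varphi$: since $M_c$ is an $\ikb$-model (Lemma \ref{lem::mc-is-canonical}), in particular $\sim_c$ is forward confluent, so by Proposition \ref{lem::classical-diamonds} it suffices to show $M_c,\Gamma \models \Diamond\varphi$ iff there is $\Sigma$ with $\Gamma \sim_c \Sigma$ and $\varphi \in \Sigma$. The easy direction: if $\Gamma \sim_c \Sigma$ and $\varphi \in \Sigma$, then $\varphi \in \Gamma^\Diamond$, i.e. $\Diamond\varphi \in \Gamma$, but wait — we need the other inclusion; from $\Sigma \subseteq \Gamma^\Diamond$ and $\varphi \in \Sigma$ we directly get $\Diamond\varphi \in \Gamma$, and then the induction hypothesis gives $M_c,\Sigma \models \varphi$, so $M_c,\Gamma \models \Diamond\varphi$.

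The remaining direction — $\Diamond\varphi \in \Gamma$ implies there is a $\ckb$-theory $\Sigma$ with $\Gamma \sim_c \Sigma$ and $\varphi \in \Sigma$ — is where I expect the main obstacle, and it requires another Zorn's Lemma construction in the same style as Lemmas \ref{lem::sim_is_confluent} and \ref{lem::verbose-proof-for-box}. I would start from the set $\Phi$ obtained by closing $\Gamma^\Box \cup \{\varphi\}$ under $\mathbf{MP}$, check that $\bot \notin \Phi$ (if $\bot \in \Phi$ then $\ckb \vdash \bigwedge_i \psi_i \land \varphi \to \bot$ for some $\Box\psi_i \in \Gamma$, whence by $\mathbf{Nec}$, $K_\Box$ and $K_\Diamond$ one derives $\Diamond\varphi \to \Diamond\bot$ modulo the $\Box\psi_i$, and since $\ckb \vdash N$ this contradicts $\Diamond\varphi \in \Gamma$), and then verify that $\Phi$ satisfies $\Gamma^\Box \subseteq \Phi$ and $\Phi \subseteq \Gamma^\Diamond$ — the latter because any $\theta \in \Phi$ is $\ckb$-provable from finitely many $\psi_i$ with $\Box\psi_i \in \Gamma$ together with $\varphi$, so $\Box(\bigwedge \psi_i \to \theta) \in \Gamma$ via $\mathbf{Nec}$, hence $\Diamond\varphi \to \Diamond\theta$ using $K_\Diamond$ and a $K$-style argument, giving $\Diamond\theta \in \Gamma$. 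Then extend $\Phi$ to a maximal such set $\Sigma$ by Zorn's Lemma, and argue as before that maximality forces the disjunction property (if $\chi \lor \eta \in \Sigma$ but neither disjunct is in $\Sigma$, then adding either disjunct must violate one of the closure conditions, and a short argument using $\neg\chi, \neg\eta \in \Sigma$ derives a contradiction), so $\Sigma$ is a $\ckb$-theory with $\Gamma \sim_c \Sigma$ and $\varphi \in \Sigma$. This is genuinely where the work lies; the rest is bookkeeping. I would likely present the $\Diamond$-witness construction in full and refer back to Lemma \ref{lem::verbose-proof-for-box} for the structurally identical maximality-forces-theory step.
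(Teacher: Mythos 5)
Your proposal is correct and follows essentially the same route as the paper: structural induction with standard propositional cases, Lemma \ref{lem::verbose-proof-for-box} for the hard direction of $\Box$, forward confluence (via Proposition \ref{lem::classical-diamonds}) to evaluate $\Diamond$ classically, and a Zorn's Lemma extension of the $\mathbf{MP}$-closure of $\Gamma^\Box \cup \{\varphi\}$ to produce the $\sim_c$-successor witnessing $\Diamond\varphi$. The only cosmetic difference is that you explicitly verify $\bot$ is not in that closure, which the paper leaves implicit.
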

\begin{proof}
    The proof is by structural induction on modal formulas.
    \begin{itemize}
        \item If $\varphi = P$, then, for all $\Gamma\in W_c$, we have $M_c,\Gamma\models P$ iff $\Gamma\in V_c(P)$ iff $P\in\Gamma$.

        \item If $\varphi = \bot$, then the lemma holds because $W_c^\bot = \emptyset$ and there is no $\Gamma\in W_c$ such that $\bot\in\Gamma$.

        \item If $\varphi = \psi_1\land\psi_2$, then, for all $\Gamma\in W_c$,
        \begin{align*}
            M_c,\Gamma\models \psi_1\land\psi_2 &\text{ iff } M_c,\Gamma\models\psi_1 \text{ and } M_c,\Gamma \models \psi_2 \\
            &\text{ iff } \psi_1\in\Gamma \text{ and }\psi_2\in\Gamma \\
            &\text{ iff } \psi_1\land \psi_2\in\Gamma.
        \end{align*}

        \item If $\varphi = \psi_1\lor\psi_2$, then, for all $\Gamma\in W_c$,
        \begin{align*}
            M_c,\Gamma\models \psi_1\lor\psi_2 &\text{ iff } M_c,\Gamma\models\psi_1 \text{ or } M_c,\Gamma \models \psi_2 \\
            &\text{ iff } \psi_1\in\Gamma \text{ or }\psi_2\in\Gamma \\
            &\text{ iff } \psi_1\lor \psi_2\in\Gamma.
        \end{align*}
        Here we use that if $\psi_1\lor\psi_2\in\Gamma$ then $\psi_1\in\Gamma$ or $\psi_2\in\Gamma$, as $\Gamma$ is a $\ckb$ theory.

        \item Let $\varphi := \psi_1\to\psi_2$ and $\Gamma\in W_c$.
        First suppose that $\psi_1\to\psi_2\in\Gamma$.
        Let $\Delta$ be a theory such that $\Gamma\preceq_c\Delta$ and $M_c,\Delta\models\psi_1$.
        By the induction hypothesis, $\psi_1\in\Delta$.
        As $\Gamma\preceq_c\Delta$, $\psi_1\to\psi_2\in\Delta$.
        By $\mathbf{MP}$, $\psi_2\in\Delta$.
        So $\Gamma\models\psi_1\to\psi_2$.

        Now suppose that $\psi_1\to\psi_2\not\in\Gamma$.
        Take $\Upsilon$ to be the closure of $\Gamma\cup \{\psi_1\}$ under $\mathbf{MP}$.
        If $\psi_2\in\Upsilon$, then there is $\chi\in \Gamma$ such that $\ckb\vdash(\chi \land \psi_1)\to \psi_2$.
        And so $\ckb\vdash\chi \to (\psi_1\to \psi_2)$.
        As $\chi\in\Gamma$, this means $\psi_1\to\psi_2\in \Gamma$, a contradiction.
        Therefore $\psi_2\not\in\Upsilon$.
        By Zorn's Lemma, we can build a theory $\Sigma$ such that $\Upsilon\subseteq \Sigma$ and $\psi_2\not\in\Sigma$.
        By the induction hypothesis, $M_c,\Sigma\models \psi_1$ and $M_c,\Sigma\not\models\psi_2$.
        As $\Gamma\preceq_c\Sigma$, we have that $M_c,\Gamma\not\models\psi_1\to\psi_2$.

        \item  Let $\varphi = \Box\psi$ and $\Gamma\in W_c$.
        First suppose that $\Box\psi\in\Gamma$.
        Let $\Gamma\preceq_c\Delta\sim_c\Sigma$.
        Then $\Box\psi\in\Delta$ and $\psi\in\Sigma$.
        By induction hypothesis, $M_c,\Sigma\models\psi$.
        So $M_c,\Gamma\models\Box\psi$.

        Now suppose that $\Box\psi\not\in\Gamma$. 
        By Lemma \ref{lem::verbose-proof-for-box}, there are $\ckb$-theories $\Delta$ and $\Sigma$ such that $\Gamma\preceq_c\Delta\sim_c\Sigma$ and $\varphi\not\in\Sigma$.
        By the induction hypothesis, $M_c,\Sigma\not\models\varphi$.
        Therefore $M_c,\Gamma\not\models\Box\varphi$.

        \item Let $\varphi = \Diamond\psi$ and $\Gamma\in W_c$.
        First suppose that $\Diamond\psi\in\Gamma$.
        Let $\Upsilon$ be the closure under $\mathbf{MP}$ of $\Gamma^\Box\cup\{\psi\}$.
        $\Gamma^\Box\subseteq\Upsilon$ holds by definition.
        Let $\theta\in\Upsilon$, then $\chi\land \psi \to \theta\in \ckb$ for some $\chi\in\Gamma^\Box$.
        Thus $\chi\to (\psi\to\theta)\in \ckb$ and $\Box\chi\to \Box(\psi\to\theta)\in \ckb$ by $K$.
        So $\Box(\psi\to\theta)\in \Gamma$.
        By $K$, $\Diamond\psi\to\Diamond\theta\in\Gamma$.
        So $\Diamond\theta\in\Gamma$.
        By Zorn's Lemma, there is a theory $\Delta$ such that $\Gamma\sim_c\Delta$ and $\psi\in\Delta$.
        By induction hypothesis, $\Delta\models\psi$.
        Therefore $\Gamma\models\Diamond\psi$.

        Now suppose that $\Diamond\psi\not\in\Gamma$.
        Let $\Delta$ be such that $\Gamma\sim_c\Delta$ and $\psi\in\Delta$.
        By the definition of $\sim_c$, $\Delta\subseteq \Gamma^\Diamond$, so $\psi\in\Gamma^\Diamond$.
        Therefore $\Diamond\psi\in\Gamma$, a contradiction.
        We conclude that, for all $\Delta\in W_c$, if $\Gamma\sim_c\Delta$, then $\psi\not\in\Delta$.
        By the induction hypothesis, $M_c,\Delta\not\models\psi$.
        Therefore $M_c,\Gamma\not\models\Diamond\psi$.
    \end{itemize}
    This finishes the proof of Lemma \ref{lem::truth-lemma-CKB}.
\end{proof}

\begin{lemma}
    \label{lem::completeness-CKB}
    Let $\varphi$ be a modal formula. If $\ikb\models\varphi$ then $\ckb\vdash\varphi$.
\end{lemma}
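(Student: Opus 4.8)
The plan is to argue by contraposition: assuming $\ckb\not\vdash\varphi$, I will exhibit a single $\ikb$-model falsifying $\varphi$ — namely the $\ckb$-canonical model $M_c$ — and thereby conclude $\ikb\not\models\varphi$.

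First I would produce a $\ckb$-theory $\Gamma$ with $\varphi\notin\Gamma$ by a Lindenbaum-style argument. Consider the collection of all $\mathbf{MP}$-closed sets of formulas that contain every axiom of $\ckb$ but do not contain $\varphi$. Since $\ckb\not\vdash\varphi$, the set $\ckb$ itself (viewed as a set of formulas) belongs to this collection, so it is non-empty; and the union of a $\subseteq$-chain of such sets is again $\mathbf{MP}$-closed (because $\mathbf{MP}$ has finitely many premises) and still avoids $\varphi$. By Zorn's Lemma there is a maximal such set $\Gamma$. Then $\bot\notin\Gamma$, since otherwise $\varphi\in\Gamma$ via the tautology $\bot\to\varphi$ and $\mathbf{MP}$. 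Moreover $\Gamma$ has the disjunction property: if $\psi_1\lor\psi_2\in\Gamma$ but $\psi_i\notin\Gamma$ for both $i$, then maximality forces $\varphi$ into the $\mathbf{MP}$-closure of $\Gamma\cup\{\psi_i\}$, so $\ckb\vdash\chi_i\to(\psi_i\to\varphi)$ for some $\chi_i\in\Gamma$, whence $\psi_i\to\varphi\in\Gamma$; combining these with the intuitionistic tautology $(\psi_1\to\varphi)\land(\psi_2\to\varphi)\to(\psi_1\lor\psi_2\to\varphi)$ and $\mathbf{MP}$ yields $\varphi\in\Gamma$, a contradiction. Hence $\Gamma$ is a $\ckb$-theory, i.e. $\Gamma\in W_c$.

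Next I would invoke the Truth Lemma (Lemma \ref{lem::truth-lemma-CKB}): since $\varphi\notin\Gamma$, we get $M_c,\Gamma\not\models\varphi$. By Lemma \ref{lem::mc-is-canonical}, $M_c$ is an $\ikb$-model, so this witnesses $\ikb\not\models\varphi$. Taking the contrapositive, $\ikb\models\varphi$ implies $\ckb\vdash\varphi$, as required.

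I do not expect a serious obstacle here: the substance of the argument is carried by Lemma \ref{lem::mc-is-canonical} (the canonical model is an $\ikb$-model) and the Truth Lemma, both already established. The only point requiring a little care is the Lindenbaum construction — checking that the maximal set obtained is $\mathbf{MP}$-closed and enjoys the disjunction property — but this follows exactly the pattern already used in the Zorn's Lemma arguments of Lemmas \ref{lem::sim_is_confluent} and \ref{lem::verbose-proof-for-box}.
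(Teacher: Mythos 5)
Your proof is correct and follows essentially the same route as the paper: contraposition, a Zorn/Lindenbaum construction of a $\ckb$-theory $\Gamma$ omitting $\varphi$, the Truth Lemma, and the fact that $M_c$ is an $\ikb$-model. You merely spell out the Lindenbaum step (closure under $\mathbf{MP}$ and the disjunction property) that the paper leaves implicit, and you do so correctly.
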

\begin{proof}
    Suppose $\ckb\not\vdash\varphi$.
    By Zorn's Lemma, there is a $\ckb$-theory $\Gamma$ not containing $\varphi$.
    By the Truth Lemma, $M_c,\Gamma\not\models\varphi$.
    Therefore $\ikb\not\models\varphi$.
\end{proof}
%%%%%%%%%%%%%%%%%%%%%%%%%%%%%%%%%%%%%%%%%%%%%%%%%%%%%%%%%%%%%%%

%%%%%%%%%%%%%%%%%%%%%%%%%%%%%%%%%%%%%%%%%%%%%%%%%%%%%%%%%%%%%%%
\section{Conclusion}
\label{sec::conclusion}
%%%%%%%%%%%%%%%%%%%%%%%%%%%%%%%%%%%%%%%%%%%%%%%%%%%%%%%%%%%%%%%
We showed that that the constructive and intuitionistic variations of $\mathsf{KB}$ coincide.
This is in contrast to the constructive and intuitionistic variations of $\mathsf{K}$, which do not prove the same $\Diamond$-free formulas.
This also implies that constructive and intuitionistic variations of $\mathsf{DB}$, $\mathsf{TB}$, $\mathsf{KB5}$, and $\mathsf{S5}$ coincide.
See \cite{arisaka2015nested} and \cite{simpson1994proof} for definitions of these logics.

We close the paper with a problem.
A \emph{$\ck$-frame} is a $\ck$-model without a valuation function. That is, a $\ck$-frame is a tuple $F=\tuple{W, W^\bot ,\preceq, R}$ where: $W$ is the set of \emph{possible worlds};  $W^\bot\subseteq W$ is the set of \emph{fallible worlds}; the \emph{intuitionistic relation} $\preceq$ is a reflexive and transitive relation over $W$; and the \emph{modal relation} $R$ is a relation over $W$.
A $\ck$-frame $F=\tuple{W, W^\bot ,\preceq, R}$ \emph{validates} a formula $\varphi$ iff, for all $\ck$-model obtained by adding a valuation to $F$ and all world $w\in W$, $M,w\models \varphi$.
\begin{problem}
    Characterize necessary and sufficient conditions for $\ck$-frames to validate the following axioms:
    \begin{itemize}
        \item $B_\Box := P \to \Box\Diamond P$, $B_\Diamond := \Diamond\Box P \to P$;
        \item $4_\Box := \Box\Box P \to \Box P$, $4_\Diamond := \Diamond\Diamond P \to \Diamond P$;
        \item $5_\Box := \Diamond P \to \Box\Diamond P$, $5_\Diamond := \Diamond\Box P \to \Box P$;
        \item $T_\Box := \Box P \to P$, $T_\Diamond := P \to \Diamond P$; and
        \item $D := \Box P \to \Diamond P$.
    \end{itemize}
\end{problem}

%%%%%%%%%%%%%%%%%%%%%%%%%%%%%%%%%%%%%%%%%%%%%%%%%%%%%%%%%%%%%%%
\bibliographystyle{alphaurl}
\bibliography{bibliography}

\end{document}